\newtheorem{thm}{Theorem}[section]
\newtheorem{lem}[thm]{Lemma}
\newtheorem{cor}[thm]{Corollary}
\newtheorem{prop}[thm]{Proposition}
\newtheorem{dfn}[thm]{Definition}
\newtheorem{example}[thm]{Example}
\newtheorem{rmk}[thm]{Remark}
\numberwithin{equation}{section}
\newcommand{\vb}{\,|\,}
\newcommand{\Hom}{{\rm Hom}}
\newcommand{\cD}{\mathcal{D}}
\newcommand{\cC}{\mathcal{C}}
\newcommand{\cF}{\mathcal{F}}
\newcommand{\Z}{\mathbb{Z}}
\newcommand{\R}{\mathbb{R}}
\newcommand{\C}{\mathbb{C}}
\newcommand{\Tw}{\textup{Tw}}
\newcommand{\F}{\mathcal{F}}
\newcommand{\Cone}{\mathrm{Cone}}
\newcommand{\Perf}{\textup{Perf}}
\newcommand{\cW}{\mathcal{W}}
\newcommand{\W}{\mathcal{W}}
\newcommand{\len}{\mathrm{len}}
\newcommand{\Gr}{\mathrm{Gr}}
\begin{document}

	\title{On categorical entropy from the viewpoint of symplectic topology}
	
	\author{Hanwool Bae}
	\address[Hanwool Bae]{Center for Quantum Structures in Modules and Spaces, Seoul National University, Seoul, South Korea}
	\email{hanwoolb@gmail.com}
	
	\author{Dongwook Choa}
	\address[Dongwook Choa]{Department of Mathematics\\
		Korea Institute for Advanced Study	\\
		Seoul 02455, Korea}
	\email{dwchoa@kias.re.kr}
	
	\author{Wonbo Jeong}
	\address[Wonbo Jeong]{Department of Mathematical Sciences, Research Institute in Mathematics\\ Seoul National University\\ Gwanak-gu\\Seoul \\ South Korea}
	\email{wonbo.jeong@gmail.com}
	
	\author{Dogancan Karabas}
	\address[Dogancan Karabas]{Kavli Institute for the Physics and Mathematics of the Universe (WPI), The University of Tokyo Institutes for Advanced Study, The University of Tokyo, Kashiwa, Chiba 277-8583, Japan}
	\email{dogancan.karabas@ipmu.jp}
	
	\author{Sangjin Lee}
	\address[Sangjin Lee]{Center for Geometry and Physics\\ Institute for Basic Science (IBS)\\ Pohang 37673, Korea}
	\email{sangjinlee@ibs.re.kr}

	\begin{abstract}
		In this paper, motivated by symplectic topology, we explore categorical entropy and present two main results. The first result establishes a relation between categorical entropies of functors on a category and its localisation.
		Additionally, it demonstrates analogies between the notions of topological and categorical entropy.
		This result is then applied to symplectic topology, where we provide a method for calculating the categorical entropy of a functor on a (partially) wrapped Fukaya category, assuming that the functor is induced by a compactly supported symplectic automorphism.
		
		For the second main result of the paper, we observe the existence of natural examples of symplectic manifolds whose Fukaya categories satisfy a type of Floer-theoretic duality. Motivated by this observation, we prove that categorical entropy can be computed from the morphism spaces under the assumption of duality. The formula is similar to a result of \cite{Dimitrov-Haiden-Katzarkov-Kontsevich}, which is proven for the case of smooth and proper categories. 
	\end{abstract}
	
	\maketitle

	\section{Introduction}
	\label{section introduction}
	Let $\cC$ be a triangulated category, and let $\Phi: \cC \to \cC$ be an exact endofunctor on $\cC$. 
	Then, the pair $(\cC, \Phi)$ forms a categorical dynamical system. 
	Dimitrov, Haiden, Katzarkov, and Kontsevich \cite{Dimitrov-Haiden-Katzarkov-Kontsevich} 
	defined an invariant measuring the {\em complexity} of the dynamical system $(\cC, \Phi)$.
	The invariant is called {\em categorical entropy} of $\Phi$. 
	After \cite{Dimitrov-Haiden-Katzarkov-Kontsevich} introduced the notion, categorical entropy has drawn many interests, see \cite{Kikuta17, Fan18, Kikuta-Takahashi,  Ouchi20, Kikuta-Shiraishi-Takahashi20, Kikuta-Ouchi, Fan-Fu-Ouchi21, Ikeda21} for example.
	
	In the current paper, we study categorical entropy, motivated by the viewpoint of symplectic topology.
	Consider the following situation: 
	Let $M$ be a symplectic manifold, and let $\phi: M \to M$ be a symplectic automorphism.
	It is well-known that $\phi$ induces an auto-equivalence $\Phi$ on the Fukaya category of $M$. 
	Then, the Fukaya category of $M$ and the induced auto-equivalence $\Phi$ form a categorical dynamical system. 
	
	Many natural questions arise from this situation. 
	For example, the pair $(M, \phi)$ induces not only the categorical dynamical system, but also a topological dynamical system.
	What is the relation between the topological entropy of $\phi$ and the categorical entropy of $\Phi$?
	Another question concerns the properties of topological entropy known in the literature: Does categorical entropy exhibit similar properties?
	
	In the current paper, we study categorical entropy motivated by the following two specific questions:
	\begin{enumerate}
		\item[Question 1.] If $\phi$ is an exact symplectic (more precisely, Liouville) automorphism on a Weinstein manifold $W$, $\phi$ induces an auto-equivalence on each of the compact, wrapped, and partially wrapped Fukaya categories of $W$. 
		Then, what relations exist among the categorical entropies of the induced functors?
		\item[Question 2.] The categorical entropy is not easy to compute. 
		In the above situation, can we compute the categorical entropy of $\Phi$ {\em practically} from the symplectic topology?
	\end{enumerate}
	
	The first half of the paper, motivated by Question 1, studies the categorical entropies on a category and its localisation. 
	We note that Question 1 is motivated by symplectic topology, but everything in the first half, except the application subsection (Section \ref{section Entropy for wrapped Fukaya categories via compactly supported symplectomorphisms}), is written in the language of category theory. 
	The second half provides a way to compute the categorical entropy of a symplectic automorphism $\phi$ where $\phi$ is defined on a Weinstein manifold satisfying a {\em duality} condition.
	Moreover, it can be generalized to a triangulated category, under a {\em duality} condition.
	The rest of this introduction section contains more details on our results. 
	
	To answer Question 1, in the first half of the paper, we prove Theorem \ref{thm localisation} that is stated/proven in purely categorical way. 
	\begin{thm}[= Theorem \ref{thm:entropy-loc}]
		\label{thm localisation}
		Let $\cC$ be a triangulated (or pretriangulated $\mathcal{A}_\infty$/dg) category, and let $\cD$ be a triangulated full subcategory of $\cC$. 
		Let $\Phi_\cC:\cC \to \cC$ be a functor preserving $\cD$ so that there are induced functors
		\[\Phi_{\cD}:\cD\to\cD\quad\text{and}\quad\Phi_{\cC/\cD}: \cC/\cD \to \cC/\cD.\]
		If $h_t(\Phi)$ means the categorical entropy of a functor $\Phi$, the following inequalities hold:
		\begin{gather*}
			h_t\left(\Phi_{\cC/\cD}\right) \leq h_t\left(\phi_\cC\right) \leq \max \left\{h_t(\Phi_{\cC/\cD}),  h_t(\Phi_\cD)\right\}.
		\end{gather*}
	\end{thm}  
	
	We note that Theorem \ref{thm localisation} proves analogous relations between notions of topological and categorical entropies. 
	The analogous properties that topological and categorical entropy share could be another motive in proving Theorem \ref{thm localisation}, which does not rely on symplectic topology.
	
	To give an example of an analogy, let us assume that $X_1$ and $X_2$ are topological spaces equipped with continuous self mappings $f_i:X_i \to X_i$.
	If there exists a surjective map $F: X_1 \to X_2$ such that $F \circ f_1 = f_2 \circ F$, then it is known that 
	\[\text{the topological entropy of  } f_1 \geq \text{  the topological entropy of  } f_2.\]
	In category theory, one can see $\Phi_\cC, \Phi_{\cC/\cD}$ and the localisation functor $\cC\to\cC/\cD$ as the categorical analogues of $f_1, f_2$ and $F$, then one can see the analogy between the first inequality in Theorem \ref{thm localisation} and the above inequality.
	Similarly, the second inequality in Theorem \ref{thm localisation} also gives an analogy between two notions of entropies. 
	For more details, see Section \ref{subsection comparison with the topological entropy}.
	
	Since Question 1 is motivated by symplectic topology, it would be natural to apply Theorem \ref{thm localisation} in a symplectic setting. 
	As an application, we have the following theorem:
	\begin{thm}[= Theorem \ref{thm:entropy-w-pw}]
		\label{thm wrapped vs partially wrapped}
		If $h_t\left(\Phi_{\cW(W)}\right),h_{-t}\left(\Phi^{-1}_{\cW(W)}\right) \geq 0$ (the conditions automatically hold by definition when $t=0$), then
		\begin{gather}
			\label{eqn equal} h_t\left(\Phi_{\cW(W)}\right)=h_t\left({\Phi_{\cW(W,\Lambda)}}\right)=h_{-t}\left({\Phi^{-1}_{\cW(W,\Lambda)}}\right)=h_{-t}\left(\Phi^{-1}_{\cW(W)}\right),
		\end{gather}
		where $\Phi_{\cW(W)}$ (resp.\ $\Phi_{\cW(W,\Lambda)}$) denotes the functor induced from a compactly supported exact symplectic automorphism $\phi\colon W\to W$ on the wrapped Fukaya category $\cW(W)$ of $W$ (resp.\ partially wrapped Fukaya category $\cW(W,\Lambda)$ of $W$ with a stop $\Lambda$). The first equality in \eqref{eqn equal} holds without the condition $h_{-t}\left(\Phi^{-1}_{\cW(W)}\right) \geq 0$.
	\end{thm}
	
	Note that Equation \eqref{eqn equal} is a potential practical tool for computing the categorical entropy of $\Phi_{\cW(W)}$ for a general Weinstein manifold $W$ and a general compactly supported symplectic automorphism $\phi$. 
	To be more precise, let us recall that for a general $W$, it is always possible to choose a stop $\Lambda$ making the corresponding partially wrapped Fukaya category $\cW(W,\Lambda)$ smooth and proper.
	Then, by applying \cite[Theorem 2.6]{Dimitrov-Haiden-Katzarkov-Kontsevich}, the categorical entropy of the functor $\Phi_{\cW(W)}$ can be computed via {\em Lagrangian Floer theory}, as
	\begin{equation}\label{eqn smooth-proper}
		h_t\left(\Phi_{\cW(W)}\right)=h_t\left({\Phi_{\cW(W,\Lambda)}}\right)=\lim_{n\to\infty}\tfrac{1}{n}\log\sum_{k\in\Z}\dim \Hom^k_{\cW(W,\Lambda)}\left(G,\phi^n(G)\right)e^{-kt} ,
	\end{equation}
	where $G$ is a split-generator of $\cW(W,\Lambda)$, which is described in \cite{Chantraine-Rizell-Ghiggini-Golovko, Ganatra-Pardon-Shende18b}.
	
	The application of Theorem \ref{thm localisation} reveals the above equality between categorical entropy on wrapped and partially wrapped Fukaya categories, but the relation between categorical entropy on compact and (partially) wrapped Fukaya categories is still to be investigated. 
	To reveal the relation at least for some cases, in the second half, we restrict our attention to symplectic manifolds satisfying a duality condition between compact and wrapped Fukaya categories, which is similar to the Koszul duality condition. 
	For a formal statement of our duality condition, see Theorem \ref{thm generalized version}. 
	An example of symplectic manifold satisfying our duality condition (and having Koszul duality pattern) is a plumbing space of $T^*S^n$ where $n \geq 3$, along a tree. 
	See \cite{Etgu-Lekili17} for more details.

	For the symplectic manifolds with the duality condition, by comparing categorical entropy on Fukaya categories of two different flavors, we find equations answering Question 2 and also answering a part of Question 1. 
	The following theorem is the result:
	\begin{thm}[= Theorem \ref{thm general}]
		\label{thm generalized version}
		Let us assume that a Weinstein manifold $W$ has sets of Lagrangians $\{S_i\}_{1 \leq i \leq k}$ and $\{L_i\}_{1 \leq i \leq k}$ such that 
		\begin{itemize}
			\item $\{S_i\}_{1 \leq i \leq k}$ (resp.\ $\{L_i\}_{1 \leq i \leq k}$) generates the compact (resp.\ wrapped) Fukaya category of $W$,
			\item the morphism space in the wrapped Fukaya category $\Hom^*(S_i,L_j)$ is non-zero only if $i=j$ and is one-dimensional when $i=j$,
			\item $\bigoplus_{i,j} \Hom^* (S_i,S_j)$ is non-negatively graded, 
			\item  $\bigoplus_{i,j} \Hom^* (L_i,L_j)$ is non-positively graded, 
			\item $\dim \Hom^0 (S_i,S_i) = 1 = \dim \Hom^0 (L_i,L_i)$ for all $1\leq i \leq k$, and 
			\item $\Hom^0(S_i,S_j) = 0$ for all $i \neq j$.
		\end{itemize}
		Let $S := \bigoplus_i S_i, L:= \bigoplus_i L_i$.
		If $\phi$ is a Liouville automorphism on $W$, and if $\Phi_\mathcal{F}, \Phi_\mathcal{W}$ denote the functors on the compact and wrapped Fukaya category induced from $\phi$,
		then 
		the following hold:
		\begin{gather}
			\label{eqn result1}	h_t \left( \Phi_{\mathcal{F}}\right) = \lim_{m \to \infty} \tfrac{1}{m} \log  \sum_{k \in \Z} \dim \Hom^{n+k} \left(\phi^m(S), L\right) e^{kt}, \\
			\label{eqn result2}	h_t \left( \Phi_{\mathcal{W}}\right) = \lim_{m \to \infty} \tfrac{1}{m} \log  \sum_{k\in \Z} \dim \Hom^{n-k} \left(S, \phi^m(L)\right) e^{kt}, \\
			\label{eqn result3}	h_{t} \left(\Phi_{\F}\right) = h_{-t}\left(\Phi^{-1}_{\W}\right), h_{t} \left(\Phi_{\W}\right) = h_{-t}\left(\Phi^{-1}_{\F}\right).
		\end{gather}
		Moreover, if $h_t\left(\Phi_\mathcal{F}\right), h_t\left(\Phi_\mathcal{W}\right) \geq 0$ (these conditions automatically hold when $t=0$), and $\phi$ is compactly supported, then 
		\begin{gather}
			\label{eqn result4} h_{t} \left(\Phi_{\F}\right) = h_{t} \left(\Phi_{\W}\right) .
		\end{gather}		
	\end{thm}
	
	Equations \eqref{eqn result1} and \eqref{eqn result2} look similar to Equation \eqref{eqn smooth-proper}, but we do not require that the categories are smooth or proper. 
	In the proof of Theorem \ref{thm generalized version}, we employ the duality condition instead of smoothness and properness, and we can compute the categorical entropy as the exponential growth of the dimensions of morphism spaces. 
	
	We would like to point out that by combining Theorems \ref{thm wrapped vs partially wrapped} and \ref{thm generalized version}, one can compare entropies on compact, wrapped, and partially wrapped Fukaya categories. 
	
	\begin{rmk}
		\label{rmk category with duality}
		The proof of Equations \eqref{eqn result1}--\eqref{eqn result3} in Theorem \ref{thm generalized version} requires only the duality between $\cF$ and $\cW$. 
		Thus, one can generalize Equations \eqref{eqn result1}--\eqref{eqn result3} to a pair of categories $\left(\cC \supset \cD\right)$ satisfying the duality conditions. 
		An example of a pair of categories satisfying the duality condition is the pair of categories of the modules and finite-dimensional modules over $\Gamma_n T$, where $\Gamma_n T$ denotes Ginzburg $n$-Calabi-Yau dg algebra (without potential) over a tree $T$.
		The duality condition for the example can be proven from the quasi-equivalence between the wrapped (resp.\ compact) Fukaya category of the plumbing of $T^*S^n$ along $T$ and the module categories (resp.\ finite-dimensional module categories) over $\Gamma_n T$. 
	\end{rmk}
	
	Even though Theorem \ref{thm generalized version} gives formulas computing categorical entropy in a symplectic setting, the actual computation is not an easy task. 
	In Section \ref{section entropies of symplectic automorphisms of Penner type}, we give a construction of symplectic automorphisms whose categorical entropy are computed by simple linear algebra. 
	From the construction, one can find useful examples of symplectic automorphisms, for example, an element of a symplectic Torelli group having positive categorical entropy. 
	For more details, see Section \ref{subsubsection torelli group}.
	
	More specifically, we focus on symplectic automorphisms of {\em Penner type} (see Definition \ref{def Penner type},) which is constructed by a higher dimensional generalization of Penner's construction \cite{Penner} of pseudo-Anosov surface automorphisms. 
	In his thesis \cite{Lee}, the last-named author proved that symplectic automorphisms of the Penner type satisfy geometric stability. 
	We expected that one could easily compute the categorical entropy of the Penner type because of the geometric stability.
	The expectation turns out to be true: one can compute the categorical entropy by simple linear algebra.
	More precisely, we prove the following: 
	\begin{thm}[=Theorem \ref{thm:entropyofpennertype}]
		\label{thm penner type}
		Let $\phi$ be a symplectic automorphism of Penner type, defined on a plumbing space of $T^*S^n$ along a tree for some $n\geq 3$.
		Then, the Lagrangian Floer homology between $\phi(S)$ and $L$ defines a matrix $M_{\phi}$ such that the logarithm of the spectral radius of $M_\phi$ is the categorical entropy of $\phi$.
	\end{thm}
	
	\begin{rmk}
		After the first version of the current paper appearing on arXiv, the first and last named authors \cite{Bae-Lee22} proved that if $\phi$ is a compactly supported symplectic automorphism on a Weinstein manifold, then the categorical entropy of $\phi$ bounds the topological entropy of $\phi$ from below. 
		Thus, the answers to Question 2 given in the current paper can provide a lower-bound of the topological entropy of a corresponding symplectic automorphism. 
	\end{rmk}

	\subsection{Acknowledgment}
	\label{subsection acknowledgment}
	We appreciate Jongmyeong Kim for sharing useful information on categorical entropy. 
	
	During this work, Hanwool Bae was supported by the National Research Foundation of Korea(NRF) grant funded by the Korea government(MSIT) (No.2020R1A5A1016126), Dongwook Choa was supported by the KIAS individual grant (MG079401), Wonbo Jeong was supported by Samsung Science and Technology Foundation under project number SSTF-BA1402-52, Dogancan Karabas was supported by World Premier International Research Center Initiative (WPI), MEXT, Japan, and Sangjin Lee was supported by the Institute for Basic Science (IBS-R003-D1).

	\section{Twisted complex formulation of the categorical entropy}
	\label{section twisted complex formulation}

	In Section \ref{section twisted complex formulation}, we recall the notion of categorical entropy by \cite{Dimitrov-Haiden-Katzarkov-Kontsevich}, and redefine it using twisted complexes in the setting of $A_{\infty}$ and dg categories.
	While the original approach uses a single split-generator of a category, we show that the definition extends to multiple split-generators. See Proposition \ref{prp:split-entropy}.
	It has some practical advantages;
	see Sections \ref{section entropy for localised categories}, \ref{section entropies of products of Dehn twists}, or \ref{section entropies of symplectic automorphisms of Penner type}, for example. We will also compare the categorical entropies of functors with their opposites/inverses in Proposition \ref{prp:op and inv}.

	First, we set the notation. 
	Let $\cC$ be a (strictly unital or cohomologically unital) $A_{\infty}$ or dg category, which is $\Z$-graded (or $\Z/2$-graded) and $k$-linear for some coefficient field $k$. We write $\hom^*_{\cC}(A,B)$ for the morphism complex of the objects $A,B\in\cC$  with the differential $d$, and $\Hom^*_{\cC}(A,B)$ for the cohomology of $\hom^*_{\cC}(A,B)$. Equivalences of $A_{\infty}$/dg categories are quasi-equivalences, which we denote by $\simeq$. We will also use the symbol $\simeq$ when two objects (resp.\ morphisms) are homotopy equivalent (resp.\ homotopic). We write $A[n]$ for the $n$-shift of an object $A\in\cC$, which can be thought as $k[n]\otimes A$.
	We write $\{ G_1, \ldots, G_m \}_{\cC}$ (or $\{ G_1, \ldots, G_m \}$ if the category $\cC$ is clear from the context) for the full $A_{\infty}$/dg subcategory of $\cC$ consisting $G_1,\ldots,G_m\in\cC$.
	The reader can refer to \cite{Seidel08} for more details.
	
	Now, we recall the notion of twisted complex.
	\begin{dfn}\label{dfn:twisted-complex}
		Let $\cC$ be a dg category.
		\begin{enumerate}
			\item A {\em twisted complex $\tilde K=\left[(K_i[d_i])_{i=1}^n,(f_{ij})_{1\leq j<i\leq n}\right]$ in $\cC$} consists of objects $K_i\in\cC$, shifts $d_i\in\Z$, and degree $1$ morphisms
			\[f_{ij}\colon K_j[d_j]\to K_i[d_i]\]
			for $i>j$, satisfying
			\begin{equation}\label{eq:twisted-complex-mc}
				df_{ij}+\sum_{i>l>j} f_{il}\circ f_{lj}=0 .
			\end{equation}
			If we define
			\[K:=K_1[d_1]\oplus K_2[d_2]\oplus\ldots\oplus K_n[d_n],\]
			and the strictly lower triangular degree 1 matrix
			\[f:=(f_{ij})\colon K\to K ,\]
			then Equation \ref{eq:twisted-complex-mc} becomes
			\[df + f\circ f=0 .\]
			\item A {\em (homogeneous) morphism $\tilde\alpha\colon [(K_i[d_i]),(f_{ij})]\to [(L_i[e_i]),(g_{ij})]$} of twisted complexes is a (homogeneous) morphism $\alpha\colon K\to L$ with the grading $|\tilde\alpha|:=|\alpha|$ and the differential
			\[d\tilde\alpha:=d\alpha + g\circ\alpha -(-1)^{|\alpha|}\alpha\circ f.\]
			Hence, the morphism space between the twisted complexes $\tilde K:=[(K_i[d_i]),(f_{ij})]$ and $\tilde L:=[(L_i[e_i]),(g_{ij})]$ is given by
			\begin{equation}\label{eq:hom-space-tw}
				\hom^*(\tilde K,\tilde L)=\bigoplus_{i,j} \hom^*(K_i,L_j)[e_j-d_i]
			\end{equation}
			as a graded vector space, equipped with the above differential.
			
			\item The {\em $d$-shift} of $[(K_i[d_i]),(f_{ij})]$ is $[(K_i[d_i+d]),(f_{ij}[d])]$.
			\item The {\em cone of a closed degree zero morphism $\tilde\alpha$}, denoted by $\Cone(\tilde\alpha)$, is the twisted complex
			\[\left[(K_i[d_i+1], L_j[e_j]), \begin{pmatrix}f[1] & 0\\ -\alpha & g\end{pmatrix}\right] .\]
			\item A closed degree zero morphism $\tilde\alpha$ is a {\em homotopy equivalence} if  $\Cone(\tilde\alpha)\simeq 0$, i.e., $1_{\Cone(\tilde\alpha)}=d\tilde h$ for some $\tilde h$.
			\item We write $\Tw(\cC)$ for {\em the dg category of twisted complexes in $\cC$}. 
			\item For any $\tilde L\in\Tw(\cC)$, we call $\tilde K=[(K_i[d_i]),(f_{ij})]$ {\em a twisted complex for $\tilde L$} if $\tilde K$ and $\tilde L$ are homotopy equivalent in $\Tw(\cC)$.
		\end{enumerate}
	\end{dfn}

	\begin{rmk}
		If $\cC$ is an $A_{\infty}$-category, we can similarly define twisted complexes in $\cC$ and $\Tw(\cC)$. However, Equation \ref{eq:twisted-complex-mc} becomes
		\[\sum_{r=1}^{\infty}\mu^r_{\Sigma(\cC)}(f,\ldots,f)=0\]
		where $\Sigma(\cC)$ is the additive enlargement of $\cC$, which is obtained by enlarging $\cC$ by finite direct sums and shifts, and $\mu^r_{\Sigma(\cC)}$ are $A_{\infty}$-operations of $\Sigma(\cC)$. See \cite{Seidel08} for more details.
	\end{rmk}
	
	\begin{rmk}
		We will mostly use uppercase letters with tilde for twisted complexes, and lowercase letters with tilde for the morphisms between twisted complexes. When we write $\tilde K\in\Tw(\cC)$, it will be understood that $\tilde K$ is an explicit twisted complex given by
		\[\tilde K=\left[(K_i[d_i])_{i=1}^n,(f_{ij})_{1\leq j<i\leq n}\right]\]
		for some $K_i\in\cC$, $d_i\in\Z$, and degree 1 morphisms $f_{ij}\colon K_j\to K_i$ for $i>j$ satisfying (\ref{eq:twisted-complex-mc}). Note that $\tilde K$ contains shifts $d_i$'s as extra information.
	\end{rmk}
	
	\begin{rmk}
		\mbox{}
		\label{rmk iterated cone}
		\begin{enumerate}
			\item \label{item:notation} Another notation for the twisted complex
			\[\tilde K=\left[(K_i[d_i])_{i=1}^n,(f_{ij})_{1\leq j<i\leq n}\right]\]
			is
			\begin{equation}\label{eq:twisted-complex-alternative}
				[\begin{tikzcd}
					K_1[d_1]\ar[r,"f_{21}"]\ar[rr,bend right=20,"f_{(n-2)1}"]\ar[rrr,bend right=25,"f_{(n-1)1}"]\ar[rrrr,bend right=30,"f_{n1}"] & \ldots\ar[r,"f_{(n-2)(n-3)}"] & K_{n-2}[d_{n-2}]\ar[r,"f_{(n-1)(n-2)}"]\ar[rr,bend right=20,"f_{n(n-2)}"'] & K_{n-1}[d_{n-1}]\ar[r,"f_{n(n-1)}"] & K_n[d_n]
				\end{tikzcd}] .
			\end{equation}
			
			\item \label{item:iterated}A twisted complex $\tilde K\in\Tw(\cC)$ can be seen as an iterated cone in $\Tw(\cC)$. Order of taking cone can be chosen freely after properly shifting $K_i$'s. One example of iterated cones corresponding to $\tilde K$ in (\ref{eq:twisted-complex-alternative}) is
			\[\Cone(K_1[d_1-1]\to\ldots\to\Cone(K_{n-2}[d_{n-2}-1] \xrightarrow{(f_{(n-1)(n-2)}\; f_{n(n-2)})} \Cone(K_{n-1}[d_{n-1}-1] \xrightarrow{f_{n (n-1)}} K_n[d_n]))) .\]
			
			\item \label{item:twisted} A twisted complex $\tilde K\in\Tw(\cC)$ can be expressed as $\tilde K=\left[(\tilde K_i[d_i])_{i=1}^n,(\tilde f_{ij})_{1\leq j<i\leq n}\right]$,
			where $\tilde K_i\in\Tw(\cC)$, and the morphisms $\tilde f_{ij}\colon\tilde K_j[d_j]\to\tilde K_i[d_i]$ for $i>j$ satisfy
			\begin{equation*}
				d\tilde f_{ij}+\sum_{i>l>j} \tilde f_{il}\circ \tilde f_{lj}=0 .
			\end{equation*}
			In this case, we regard this twisted complex as in (\ref{eq:twisted-complex-alternative}), where each $K_i$ is replaced by the actual twisted complex $\tilde K_i=[(L^i_{p}[e^i_p]),(g^i_{pq})]\in\Tw(\cC)$, and each $f_{ij}$ is replaced by the arrows between twisted complexes coming from $\tilde f_{ij}$.
		\end{enumerate}
	\end{rmk}
	
	
	\begin{rmk}
		Given a twisted complex
		\[\tilde K=\left[(K_i[d_i])_{i=1}^n,(f_{ij})_{1\leq j<i\leq n}\right]\in\Tw(\cC),\]
		its (essential) image under an $A_{\infty}$ (or dg) functor $\Phi\colon\Tw(\cC)\to\Tw(\cD)$ is given by
		\[\Phi(\tilde K)=\left[(\Phi(K_i)[d_i])_{i=1}^n,(\tilde g_{ij})_{1\leq j<i\leq n}\right]\in\Tw(\cD),\]
		for some $\tilde g_{ij}\colon \Phi(K_j)[d_j]\to\Phi(K_i)[d_i]$, where $\Phi(\tilde K)$ is regarded as in Remark \ref{rmk iterated cone}\eqref{item:twisted}. See \cite[Section (3m)]{Seidel08} for more details. This will be useful later when we study twisted complexes under the repetitive application of an $A_{\infty}$/dg functor.
	\end{rmk}
	
	Next, we will define the length of a twisted complex, which will be used in the definition of categorical entropy.
	\begin{dfn}\label{dfn:length}
		Let $\cC$ be an $A_{\infty}$ (or dg) category. Consider a twisted complex
		\[\tilde K=\left[(K_i[d_i])_{i=1}^n,(f_{ij})_{1\leq j<i\leq n}\right]\in\Tw(\cC) .\]
		\begin{enumerate}
			\item {\em Components of $\tilde K$} is the collection
			\[\{K_i[d_i]\vb i=1,\ldots,n\} .\]
			Given a full dg subcategory $\cD$ of $\cC$, {\em $\cD$-components of $\tilde K$} is the collection
			\[\{K_i[d_i]\vb K_i\in\cD\} .\]
			\item For a given $t\in\R$, {\em the length of $\tilde K$ (at $t$)} is defined as
			\[\len_t\tilde K:=\sum_{i=1}^n e^{d_i t}.\]
			If $t=0$, then the length of $\tilde K$ is just $n$. Given full dg subcategory $\cD$ of $\cC$, {\em the length of $\tilde K$ (at $t$) with respect to $\cD$} is defined as
			\[\len_{t,\cD}\tilde K:=\sum_{K_i\in\cD} e^{d_i t}.\]
		\end{enumerate}
		When $\cD$ consists of a single object $D$, we will write $D$-components of $\tilde K$ (resp.\ $\len_{t,D}\tilde K$) for $\cD$-components of $\tilde K$ (resp.\ $\len_{t,\cD}\tilde K$) in short. When $t=0$, we will sometimes just write $\len\,\tilde K$ (resp.\ $\len_{\cD}\tilde K$) for $\len_t\tilde K$ (resp.\ $\len_{t,\cD}\tilde K$).
	\end{dfn}
	
	\begin{rmk}
		If $\cC$ is $\Z/2$-graded, the length of a twisted complex only makes sense if $t=0$. The same will be true for the complexity (Definition \ref{dfn:complexity}) and the categorical entropy (Definition \ref{dfn:entropy}).
	\end{rmk}
	
	Now, we define the notion of (split-closed) pretriangulated $A_{\infty}$/dg categories and (split-)generators, in order to define the {\em categorical entropy} later.
	
	\begin{dfn}
		\label{dfn:pretriangulated-dg}
		Let $\cC$ be an $A_{\infty}$ (or dg) category.
		\begin{enumerate}
			\item An $A_{\infty}$ (or dg) category $\cC$ is {\em pretriangulated}, if there is a quasi-equivalence $\cC\simeq \Tw(\cC)$.
			Hence, $\Tw(\cC)$ is also called {\em the pretriangulated closure} of $\cC$.
			
			\item We write $\Perf(\cC)$ for {\em the split-closed pretriangulated closure} of $\cC$, which is obtained by splitting direct summands in $\Tw(\cC)$.
			\item We say $G_1,\ldots,G_m\in\cC$ {\em generate} (resp.\ \em{split-generate}) $\cC$ if 
			\[\cC\subset\Tw(\{ G_1, \ldots, G_m \}) \hspace{0.1cm} \left(\textrm{resp.} \cC\subset\Perf(\{ G_1, \ldots, G_m \}) \right).\]
			We call $G_1, \ldots, G_m$ \em{generators} (resp.\ \em{split-generators}) of $\cC$.
		\end{enumerate}
	\end{dfn}
	
	Note that the length of a twisted complex is not invariant under homotopy equivalence, but the following definition is an invariant.
	
	\begin{dfn}\label{dfn:complexity}
		Let $\cC$ be a pretriangulated $A_{\infty}$ (or dg) category, and $G_1,\ldots,G_m,C\in\cC$.
		Given $t\in\R$, the {\em complexity of $C$ (at $t$) with respect to $G_1,\ldots,G_m$} is defined as
		\[
		\delta_t(G_1,\ldots,G_m;C):=\inf\;\{\len_t\tilde K\vb \tilde K\in\Tw\left(\{ G_1,\ldots,G_m\}\right) \text{  and } \tilde K\simeq C\oplus C' \text{ for some }C'\in\cC\}.
		\]
	\end{dfn}
	Note that if $C$ is not split-generated by $G_1,\ldots,G_m$, then $\delta_t(G_1,\ldots,G_m;C)=\infty$. Also, when $t=0$, the complexity is zero if and only if $C\simeq 0$.
	
	\begin{rmk}
		\label{rmk:tower=twistedcomplex}
		The complexity $\delta_t(G;C)$ in \cite{Dimitrov-Haiden-Katzarkov-Kontsevich} is defined on a triangulated category $\mathcal T$ using a (right) Postnikov system
		\[
		\begin{tikzcd}[column sep=tiny]
			0 \ar[rr] && C_{n-1}\ar[rr] \ar[dl]&&\ldots \ar[rr]&& C_2 \ar[rr] && C_1 \ar[rr] \ar[dl] && C\oplus C'\ar[dl]\\
			& G[d_n] \ar[ul, dashed, "+1"]&&&&&&G[d_2] \ar[ul, dashed, "+1" ]&&G[d_1] \ar[ul, dashed, "+1"]&&
		\end{tikzcd}
		\]
		rather than a twisted complex. In the $A_{\infty}$/dg enhancement $\cC$ of $\mathcal T$ (i.e., $H^0(\cC)\simeq\mathcal T$), any such system can be seen as an iterated cone of $G[d_i]$'s, which provides a twisted complex quasi-isomorphic to $C\oplus C'$ as in Remark \ref{rmk iterated cone}\eqref{item:iterated} (after setting $K_i[d_i]=G[d_i]$), and vice versa.
	\end{rmk}
	
	We will now present the definition of categorical entropy for functors on $A_{\infty}$/dg categories, which is originally given in \cite{Dimitrov-Haiden-Katzarkov-Kontsevich} on triangulated categories.
	\begin{dfn}\label{dfn:entropy}
		Let $\cC$ be a pretriangulated $A_{\infty}$ (or dg) category split-generated by a single object $G$, and $\Phi\colon \cC\to\cC$ be an $A_{\infty}$ (or dg) functor. 
		Then, for a given $t\in\R$, {\em the categorical entropy of $\Phi$} is defined as
		\[h_t(\Phi):=\lim_{n\to\infty}\frac{1}{n}\log\delta_t\left(G;\Phi^n(G)\right)\in \{-\infty\}\cup \R .\]
	\end{dfn}
	The categorical entropy of $\Phi$ is well-defined and independent of the choice of the split-generator $G$ of $\cC$, see \cite{Dimitrov-Haiden-Katzarkov-Kontsevich}. We also note that $h_{t}(\Phi)=-\infty$ if $\Phi^n(G)\simeq 0$ for some $n$. 
	
	\begin{rmk}
		\mbox{}
		\begin{enumerate}
			\item We can also define the categorical entropy for an $A_{\infty}$/dg functor $\Phi\colon\cC\to\cC$ on any $A_{\infty}$/dg category $\cC$ by considering its pretriangulated closure $\Tw(\cC)$ and the induced functor $\Phi\colon\Tw(\cC)\to\Tw(\cC)$.
			\item Since the notion of categorical entropy, Definition \ref{dfn:entropy}, is defined for $A_{\infty}$/dg functors on $A_{\infty}$/dg categories with a split-generator, we assume that every $A_{\infty}$/dg category admits a split-generator throughout this paper in the context of categorical entropy.
			\item As we remarked earlier, if $\cC$ is $\Z/2$-graded, the categorical entropy only makes sense when $t=0$. However, we can consider $\cC$ as a 2-periodic $\Z$-graded category to be able to talk about the categorical entropy when $t\neq 0$, in which case, $h_{t\neq0}(\Phi)=-\infty$ for any $\Phi$.
		\end{enumerate} 
	\end{rmk}
	
	Definition \ref{dfn:entropy} defines the categorical entropy using the length with respect to a split-generator. 
	The following proposition shows that one can also compute the categorical entropy from the length with respect to a finite collection of split-generators.
	
	\begin{prop}\label{prp:split-entropy}
		Let $\cC$ be a pretriangulated $A_{\infty}$ (or dg) category split-generated by $G_1,\ldots,G_m$, and define
		$G:=G_1\oplus\ldots\oplus G_m$. 
		Let $\Phi\colon \cC\to\cC$ be an $A_{\infty}$ (or dg) functor. Then, for a given $t\in\R$, the categorical entropy of $\Phi$ can be given by
		\[h_t(\Phi)=\lim_{n\to\infty}\frac{1}{n}\log\delta_t\left(G_1,\ldots,G_m;\Phi^n(G)\right).\]
	\end{prop}
	
	\begin{proof}
		First, note that $G$ is a split-generator for $\cC$. Let $\tilde K\in\Tw(\{G_1,\ldots,G_m\})$ be an arbitrary twisted complex for $\Phi^n(G)\oplus A$ for some $A\in\cC$. Then one can replace each $G_i$ in $\tilde K$ with $G$ to get a twisted complex $\tilde K'\in\Tw(\{G\})$ for $\Phi^n(G)\oplus A\oplus B$ for some $B\in\cC$. Note that
		$\len_t \tilde K' = \len_t \tilde K$.
		This shows
		\[\delta_t\left(G;\Phi^n(G)\right)\leq \delta_t\left(G_1,\ldots,G_m;\Phi^n(G)\right).\]
		
		On the other hand, let $\tilde L\in\Tw(\{G\})$ be an arbitrary twisted complex for $\Phi^n(G)\oplus C$ for some $C\in\cC$. Then if we replace each $G$ in $\tilde L$ with the equivalent twisted complex $[G_1\xrightarrow{0}\ldots\xrightarrow{0}G_m]$, we get a twisted complex $\tilde L'\in\Tw(\{G_1,\ldots,G_m\})$ for $\Phi^n(G)\oplus C$. Note that
		$\len_t\tilde L' = m\times \len_t \tilde L$.
		This shows
		\[\delta_t(G_1,\ldots,G_m;\Phi^n(G))\leq m\delta_t(G;\Phi^n(G)) .\]
		By applying $\lim_{n\to\infty}\frac{1}{n}\log$ to the inequalities
		\[\delta_t(G;\Phi^n(G))\leq \delta_t(G_1,\ldots,G_m;\Phi^n(G))\leq m\delta_t(G;\Phi^n(G)),\]
		we get
		\[h_t(\Phi)\leq \lim_{n\to\infty}\frac{1}{n}\log\delta_t(G_1,\ldots,G_m;\Phi^n(G))\leq h_t(\Phi).\]
	\end{proof}
	We finish this section with the following proposition regarding an opposite and an inverse functor. 
	\begin{prop}
		\label{prp:op and inv}
		Let $\cC$ be a pretriangulated $A_{\infty}$ (or dg) category, and $\Phi\colon\cC\to\cC$ be an $A_{\infty}$ (or dg) functor. For any $t\in\R$, the following hold;
		\begin{enumerate}
			\item \label{item:op} $h_t(\Phi)=h_{-t}(\Phi^{\mathrm{op}})$, where $\Phi^{\mathrm{op}}$ is an induced functor on $\cC^{\mathrm{op}}$. 
			\item \label{item:inv} If $\Phi$ is an auto-equivalence, and $\cC$ is saturated (i.e., is smooth, proper, and admits a split-generator), then $h_{t}(\Phi^{-1})=h_{-t}(\Phi)$.
		\end{enumerate}
	\end{prop}
	
	\begin{proof}
		Let $G$ be a split-generator of $\cC$. If $\tilde K= \left[(G[d_1],\ldots,G[d_n]), (f_{ij})\right]\in\Tw(\{G\}_{\cC})$
		is a twisted complex for $\Phi^n(G)\oplus C$ for some $C\in\cC$, then
		\begin{align*}
			\tilde K'=\left[(G^{\mathrm{op}}[-d_n],\ldots, G^{\mathrm{op}}[-d_1]), ((f_{n+1-j, n+1-i})^{\mathrm{op}})\right]\in\Tw(\{G^{\mathrm{op}}\}_{\cC^{\mathrm{op}}})
		\end{align*}
		is a twisted complex for $(\Phi^n(G)\oplus C)^{\mathrm{op}}=(\Phi^{\mathrm{op}})^n(G^{\mathrm{op}})\oplus C^{\mathrm{op}}$, where $(G[d_i])^{\mathrm{op}}$ is viewed as the $(-d_i)$-shift of $G^{\mathrm{op}}$ in $\cC^{\mathrm{op}}$. Since $G^{\mathrm{op}}$ split-generates $\cC^{\mathrm{op}}$, this implies the first statement. 
		
		Next, assume $\cC$ is saturated. Then it is known by \cite{Dimitrov-Haiden-Katzarkov-Kontsevich} that 
		\begin{align*}
			h_t(\Phi) &= \lim_{n\to\infty}\frac{1}{n}\log \sum_k \left( \dim(\Hom^k_\cC(G, \Phi^n(G))) \right)e^{-kt} .
		\end{align*}
		Since $\cC^{\mathrm{op}}$ is also saturated, we have
		\begin{align*}
			h_{-t}(\Phi) = h_t(\Phi^{\mathrm{op}})& = \lim_{n\to\infty}\frac{1}{n}\log \sum_k \left( \dim(\Hom^k_{\cC^{\mathrm{op}}}(G^{\mathrm{op}}, (\Phi^{\mathrm{op}})^n(G^{\mathrm{op}})))\right)e^{-kt}\\
			&=\lim_{n\to\infty}\frac{1}{n}\log \sum_k \left(\dim(\Hom^k_\cC(\Phi^n(G), G))\right)e^{-kt}\\
			&=\lim_{n\to\infty}\frac{1}{n}\log \sum_k \left( \dim(\Hom^k_\cC(G,(\Phi^{-1})^n(G)))\right)e^{-kt} = h_t(\Phi^{-1}).
		\end{align*}
	\end{proof}
	
	\section{Categorical entropy of functors descended to quotients}
	\label{section entropy for localised categories}
	The main theorem of Section \ref{section entropy for localised categories}, Theorem \ref{thm:entropy-loc}, is a comparison of categorical entropies of $A_{\infty}$/dg functors on an $A_{\infty}$/dg category, its full $A_{\infty}$/dg subcategory, and their quotient.
	
	In Section \ref{subsection comparison with the topological entropy}, we explain a motivation coming from topological entropy.
	Sections \ref{subsection localised category} and \ref{subsection Theorem entropy-loc} are devoted to the proof of the Theorem \ref{thm:entropy-loc}. 
	In Section \ref{subsection when D is admissible}, we investigate the case of categories with admissible subcategories and get a comparison result, Corollary \ref{cor:orthogonal}, which follows from Theorem \ref{thm:entropy-loc}. 
	In Section \ref{section Entropy for wrapped Fukaya categories via compactly supported symplectomorphisms}, we apply Theorem \ref{thm:entropy-loc} to symplectic topology and compare the categorical entropies of functors on wrapped and partially wrapped Fukaya categories in Theorem \ref{thm:entropy-w-pw}.
	It provides a way calculating categorical entropy of symplectic automorphisms on wrapped Fukaya categories. 
	
		\subsection{Motivation: Comparison with properties of the topological entropy}
	\label{subsection comparison with the topological entropy}
	As its name suggests, it is natural to expect that the categorical entropy and other notions of entropy share similar properties. 
	Since our motivational setting appears in (symplectic) topology, it would be natural to ask whether the categorical entropy satisfies analogous properties of those of the {\em topological} entropy.
	Especially, in the section, we consider the following three properties of topological entropy:
	\begin{enumerate}
		\item[(i)] Let $f:X \to X$ be a continuous self-mapping and let us assume that there exists a subset $Y \subset X$ such that the restriction of $f$ onto $Y$ is a self-mapping on $Y$. 
		Then, 
		\[\text{the topological entropy of  } f \geq \text{the topological entropy of  } f|_Y.\]  
		\item[(ii)] Let $f:X \to X$ be a continuous self-mapping, and let us assume that there exist two subsets $Y_1$ and $Y_2$ of $X$ such that $Y_1 \cup Y_2 = X$ and $f(Y_i) \subset Y_i.$
		It is known that
		\[\text{the topological entropy of  } f = \max_{i=1, 2} \left\{\text{the topological entropy of  } f|_{Y_i}\right\}.\]

		\item[(iii)] Take two continuous self mappings of compact space, say $f_i : X_i \to X_i$ for $i =1, 2$, and let $F:X_1 \to X_2$ be a continuous mapping such that $F \circ f_1 = f_2 \circ F$. 
		If $F$ is onto, then
		\[\text{the topological entropy of  } f_1 \geq \text{  the topological entropy of  } f_2.\]
	\end{enumerate}
	For more details on the listed properties, we refer the reader to \cite[Section 1.6]{Gromov2}.
	
	We state specific questions about the categorical entropy, which are motivated by the above properties (i)--(iii).
	\begin{enumerate}
		\item[(i')] Let $\Phi: \cC \to \cC$ be an endofunctor and $\cD \subset \cC$ be a subcategory preserved by $\Phi.$ Does the following inequality hold?
		\[ h_t(\Phi) \geq h_t(\Phi \vert_\cD).\]
		\item[(ii')] Further suppose $\cC$ admits a semi-orthogonal decomposition $\cC =\langle \cD^\perp, \cD \rangle$ and $\Phi$ preserves each components. Does the following equality hold?
		\[h_t(\Phi) = \max \left\{ h_t(\Phi \vert_\cD), h_t(\Phi\vert_{\cD^\perp}) \right\}.\]
		\item[(iii')] In general, let $\cC/\cD$ denote the quotient category of $\cC$ by $\cD$ and $\Phi_{\cC/\cD}$ denote the induced functor of $\Phi$ on the quotient. Does the following inequality hold?
		\[ h_t(\Phi)\geq h_t(\Phi_{\cC/\cD}).\]
	\end{enumerate}
	
	In Section \ref{section entropy for localised categories}, we study the above questions and the following are our answers to the questions. 
	The reasoning behind the answers will be explained in the upcoming subsections.
	\begin{enumerate}
	\item[(I)] We do not have an affirmative answer for (i') generally. However, for the case of admissible $\mathcal{D}$, we prove that the answer to (i') is yes.
	See Section \ref{subsection when D is admissible}.
	\item[(II)] The answer to (ii') is always yes. See Corollary \ref{cor:orthogonal}.
	\item[(III)] The answer to (iii') is always yes. See Theorem \ref{thm:entropy-loc}. The relevant inequality is strengthened to
	\[ h_t(\Phi_{\cC/\cD}) \leq h_t(\Phi) \leq \max\{h_t(\Phi_{\cC/\cD}),h_t(\Phi \vert_{\cD})\}.\]
	\end{enumerate} 
	
	Before moving on to the main part of \ref{section entropy for localised categories}, we would like to summarize our conventions. 
	See the remark below:
	\begin{rmk}\label{rmk:motivation}
		\mbox{}
	\begin{enumerate}
		\item 
		\label{rmk a-inf to dg}
		We will be mostly working with dg categories and dg functors, but the results and proofs will apply to $A_{\infty}$-categories and $A_{\infty}$-functors because of the following argument: Any $A_{\infty}$-category is quasi-equivalent to a dg category by Yoneda embedding. For any dg categories $\cC$ and $\cD$, we also have (see \cite{Faonte} and \cite{Canonaco-Ornaghi-Stellari})
		\[\Hom_{A_{\infty}}(\cC,\cD)\simeq \mathrm{RHom}_{\mathrm{dg}}(\cC,\cD)\simeq\Hom_{\mathrm{dg}}(\cC',\cD)\]
		where $\Hom_{A_{\infty}}(\cC,\cD)$ (resp.\ $\Hom_{\mathrm{dg}}(\cC,\cD)$) is the space of $A_{\infty}$ (resp.\ dg) functors from $\cC$ to $\cD$, $\mathrm{RHom}_{\mathrm{dg}}(\cC,\cD)$ is the derived hom-space, and $\cC'$ is a cofibrant dg category quasi-equivalent to $\cC$. This means that any $A_{\infty}$-functor between $A_{\infty}$-categories can be considered as a dg functor between dg categories.
	\item 
	\label{rmk:triangulated-cat}
	Moreover, all the results in Section \ref{section twisted complex formulation} and Section \ref{section entropy for localised categories}(except Proposition \ref{prp:op and inv}\eqref{item:inv}) hold for triangulated categories and exact functors between them. When $\cC$ and $\cD$ are triangulated categories, $\cC/\cD$ denotes the Verdier quotient. The proofs can be modified by replacing twisted complexes by iterated cones as in Remark \ref{rmk iterated cone}\eqref{item:iterated}.
	We work with pretriangulated $A_{\infty}$/dg categories instead of triangulated categories for the notational convenience.
	\end{enumerate}
	\end{rmk}

	\subsection{Quotient of an $A_{\infty}$/dg category}
	\label{subsection localised category}
	First, we define the notion of the dg quotient of a dg category.
	
	\begin{dfn}[\cite{Drinfeld}]
		Let $\cC$ be a dg category.
		\begin{enumerate}
			\item If $\cD$ is a full dg subcategory of $\cC$, then the {\em dg quotient $\cC/\cD$}, defined up to quasi-equivalence, is obtained from $\cC$ by adding a degree $-1$ morphism $\varepsilon_D\colon D\to D$ for each $D\in\cD$ freely (in algebra level), such that
			\[d\varepsilon_D=1_D .\]
			\item {\em The dg localisation functor} 
			\[l\colon\cC\to\cC/\cD\]
			is the dg functor sending everything to itself. 
		\end{enumerate}
	\end{dfn}
	It is easy to check that $D \simeq 0$ in $\cC/\cD$ for all $D \in \cD \subset \cC$.
	
	\begin{rmk}	
		We can define the $A_{\infty}$ quotient of an $A_{\infty}$-category and the $A_{\infty}$-localisation functor using the dg quotient and dg localisation functor via Remark \ref{rmk:motivation}\eqref{rmk a-inf to dg}. For a more direct approach, see \cite{Lyubashenko-Ovsienko}.
	\end{rmk}
	
	
	
	
	Next proposition shows that the ($A_{\infty}$/dg) quotient of a pretriangulated $A_{\infty}$/dg category is again pretriangulated.
	
	\begin{prop}	
		Let $\cC$ be a pretriangulated $A_{\infty}$ (or dg) category, and $\cD$ be its full $A_{\infty}$ (or dg) subcategory. Then the quotient $\cC/\cD$ is also pretriangulated, and quasi-equivalent to $\cC/\Tw(\cD)$.
	\end{prop}
	
	\begin{proof}
		Assume $\cC$ and $\cD$ are dg categories for simplicity as in Remark \ref{rmk:motivation}\eqref{rmk a-inf to dg}. Observe that $\cC/\Tw(\cD)$ is the dg quotient of $\cC/\cD$ by $\Tw(\cD)$. In $\cC/\cD$, we have $D\simeq 0$ for all $D\in\cD\subset\cC$. Therefore, $E \simeq 0$ for all $E\in\Tw(\cD)\subset\cC$. Hence, taking quotient by $\Tw(\cD)$ does not have any effect, and we get
		\[\cC/\cD\simeq \cC/\Tw(\cD)\simeq\Tw(\cC/\cD),\]
		where the second equivalence is by \cite{Drinfeld}. This in particular shows that $\cC/\cD$ is pretriangulated.
	\end{proof}
	
	In Section \ref{section entropy for localised categories}, we compare the categorical entropies of functors $\Phi_{\cC},\Phi_{\cD},\Phi_{\cC/\cD}$ on $\cC, \cD$ and $\cC/\cD$, respectively, described in the following definition.
	\begin{dfn}\label{dfn:loc-induced-functor}
		
		Let $\cC$ be a pretriangulated $A_{\infty}$ (or dg) category, and $\cD$ be its full pretriangulated $A_{\infty}$ (or dg) subcategory. Let $\Phi_{\cC}\colon \cC\to \cC$ be an $A_{\infty}$ (or dg) functor satisfying $\Phi_{\cC}(\cD)\subset\cD$. Then we define two functors:
		
		\begin{enumerate}
			\item $\Phi_{\cD}\colon\cD\to\cD$ is the restriction of $\Phi_{\cC}$ on $\cD$, i.e.,  
			\[\Phi_{\cD} := \Phi_{\cC}|_{\cD} .\]
			
			\item $\Phi_{\cC/\cD}\colon\cC/\cD\to\cC/\cD$ is the unique (up to isomorphism) $A_{\infty}$-functor satisfying
			\begin{equation}\label{eq functor descended to quotient}
				\Phi_{\cC/\cD}\circ l\simeq l\circ \Phi_{\cC}
			\end{equation}
			where $l\colon\cC\to\cC/\cD$ is the localisation functor.
		\end{enumerate}
	\end{dfn}
	
	\begin{rmk}
		The existence and uniqueness of $\Phi_{\cC/\cD}$ follow from the universal property of $A_{\infty}$/dg localisation (see e.g. \cite{Toen}): Consider the $A_{\infty}$/dg functor $l\circ \Phi_{\cC}\colon\cC\to\cC/\cD$. Since $(l\circ\Phi_{\cC})(D)\simeq 0$ for all $D\in\cD$, by the universal property, there exists a unique $A_{\infty}$-functor $\Phi_{\cC/\cD}$ (not necessarily a dg functor) up to isomorphism satisfying (\ref{eq functor descended to quotient}).
	\end{rmk}
	
	%
	The following proposition about the split-generation of quotients will be useful when comparing categorical entropies in the next subsection.
	\begin{prop}\label{prop split-generators of quotient}
		Let $\cC$ be a pretriangulated $A_{\infty}$ (or dg) category, and $\cD$ be its full pretriangulated $A_{\infty}$ (or dg) subcategory. Assume that $G\oplus D$ split generates $\cC$, and $D$ split generates $\cD$ for some $G\in\cC$ and $D\in\cD$. Then $G$ split-generates $\cC/\cD$.
		
	\end{prop}
	
	\begin{proof}
		Let  $C\in\cC/\cD$. Then we have $C\in\cC$, as $\cC$ and $\cC/\cD$ have the same objects. Hence, there exists $C'\in\cC$ such that $C\oplus C'\in\Tw(\{G\oplus D\}_{\cC})$. Then, by applying the localisation functor $l$ to $C\oplus C'\in\cC$, we get
		\[l(C\oplus C')=C\oplus C'\in \Tw(\{l(G\oplus D)\}_{\cC/\cD})=\Tw(\{G\oplus D\}_{\cC/\cD})\simeq\Tw(\{G\}_{\cC/\cD})\]
		since $D\simeq 0$ in $\cC/\cD$, where the second $C$ and $C'$ are in $\cC/\cD$. This proves that $G$ split-generates $\cC/\cD$.
	\end{proof}
	
	The following lemma will be the main ingredient of Theorem \ref{thm:entropy-loc}.
	
	\begin{lem}\label{lem:g-preserving}
		
		Let $\cC$ be a pretriangulated $A_{\infty}$ (or dg) category, and $\cD$ be its full pretriangulated $A_{\infty}$ (or dg) subcategory.
		Assume that $G\oplus D$ split-generates $\cC$, and $D$ split-generates $\cD$ for some $G\in\cC$ and $D\in\cD$.
		Let $X\in \cC$, and $\tilde L\in\Tw(\{G\}_{\cC/\cD})$ be a twisted complex for $l(X)\oplus B$ for some $B\in\cC/\cD$, i.e.,
		\[\tilde L\simeq l(X)\oplus B\]
		where $l\colon\cC\to\cC/\cD$ is the localisation functor. Then there exist a twisted complex $\tilde K\in\Tw(\{G,D\}_{\cC})$ for $X\oplus A$ for some $A\in\cC$, i.e.,
		\[\tilde K\simeq X\oplus A\]
		such that the $G$-components of $\tilde K$ are the same as the $G$-components of $\tilde L$.
		In particular,  	
		\[\len_{t,G}\tilde K=\len_t\tilde L.\]
	\end{lem}
	
	\begin{proof}		
		Assume that $\cC$ and $\cD$ are dg categories for simplicity following Remark \ref{rmk:motivation}\eqref{rmk a-inf to dg}. Before we start the proof, note that for any $X,Y\in\cC$, we have the isomorphism by \cite{Verdier77, Verdier96} (which is restated in \cite{Drinfeld})
		\begin{equation}\label{eqn isomorphisms}
			l\colon\lim_{\stackrel{\xrightarrow{\hspace{5mm}}}{(Y\to Z)\in Q_{Y}}}\Hom^*_{\cC}(X, Z)\overset{\sim}{\longrightarrow} \Hom^*_{\cC/\cD}(l(X),l(Y)) ,
		\end{equation}
		where $Q_{Y}$ is the (filtered) category of morphisms $f\colon Y\to Z$ in $\cC$ such that $\Cone(f)$ is homotopy equivalent to an object in $\cD$.
		This shows, for any $\beta\in\Hom^*_{\cC/\cD}(l(X),l(Y))$, there exists $Z\in\cC$ such that
		\begin{enumerate}
			\item[(i)] $Z\simeq\Cone(E \to Y)$ for some $E\in\cD$, and
			\item[(ii)] there is a morphism $\alpha\in\Hom^*_{\cC}(X,Z)$ so that $l(\alpha)\simeq\beta$.
		\end{enumerate}
		
		With the above arguments, we can prove the lemma by induction on the number of components of $\tilde L\in\Tw(\{G\}_{\cC/\cD})$.
		
		For the base step, let us assume that $\tilde L$ has no components, i.e., $\tilde L=0$. Note that $\tilde L\simeq l(X)\oplus B\simeq l(X\oplus A_0)$ for some $A_0\in\cC$ since $l$ is essentially surjective. Then, $X\oplus A_0\in\cC$ is in the kernel of $l$.
		It means that $X\oplus A_0\oplus A_1$ is homotopy equivalent to an object in $\cD$ for some $A_1\in\cC$ (see e.g. \cite{Krause}).
		Since $D$ split-generates $\cD$, there exists $A_2\in\cD\subset\cC$ such that $X\oplus A_0\oplus A_1\oplus A_2$ is homotopy equivalent to an object $\tilde K\in\Tw(\{D\}_{\cC})$, which has no $G$-components.
		
		For the inductive step, let us assume the induction hypothesis, i.e., we assume that the lemma holds for any $X\in\cC$ and $\tilde L\in\Tw(\{G\}_{\cC/\cD})$ satisfying $\tilde L\simeq l(X)\oplus B$ for some $B\in\cC/\cD$, whenever $\tilde L$ has $n$ many $G$-components.
		
		Now we consider $X\in\cC$ and $\tilde L\in\Tw(\{G\}_{\cC/\cD})$ such that $\tilde L\simeq l(X)\oplus B$ for some $B\in\cC/\cD$, and $\tilde L$ has $(n+1)$ many $G$-components. 
		We note that, as mentioned in Remark \ref{rmk iterated cone}\eqref{item:iterated}, the twisted complex $\tilde L$ can be written as an iterated cone. 
		Thus, there is a twisted complex $\tilde T\in\Tw(\{G\}_{\cC/\cD})$ and $d\in\Z$ such that 
		\begin{equation}\label{eq:l-tilde}
			\tilde L = \Cone\left(\tilde T \xrightarrow{\beta} G[d]\right), 
		\end{equation}
		where $\beta\in\Hom_{\cC/\cD}^0\left(\tilde T,G[d]\right)$, and $\tilde T$ has $n$ many $G$-components.
		Since the localisation functor $l\colon \cC\to\cC/\cD$ is essentially surjective, there exists $W\in\cC$ such that $l(W)\simeq \tilde T$. Moreover, by the induction hypothesis, there exist $\tilde S\in\Tw(\{G,D\}_{\cC})$ and $A'\in\cC$ such that $\tilde S\simeq W\oplus A'$, and $\tilde S$ and $\tilde T$ have the same $G$-components.
		
		From the isomorphism \eqref{eqn isomorphisms}, one can show that there is an object $Z_1 \in \cC$ such that 
		\begin{enumerate}
			\item[(i)'] $Z_1 \simeq \Cone(E_1 \to G[d])$ for some $E_1 \in \cD$, and
			\item[(ii)'] there is a morphism $\alpha \in \Hom_{\cC}^0\left(W, Z_1\right)$ so that $l(\alpha) \simeq \beta$. 
		\end{enumerate}
		
		Now, we consider the cone of $\alpha : W\to Z_1$. Let 
		\[Y := \Cone\left(W \xrightarrow{\alpha} Z_1\right) \in \cC.\]
		Then, one can easily observe that $Y$ is a lift of $\tilde L$ with respect to the localisation functor $l$, i.e., $l(Y) \simeq\tilde L$. Hence,
		\[l(Y)\simeq l(X)\oplus B\simeq l(X\oplus A_0)\]
		for some $A_0\in\cC$, since $l$ is essentially surjective.
		This shows that there is a homotopy equivalence $\delta : l(X\oplus A_0) \to l(Y)$. 
		Consequently, from \eqref{eqn isomorphisms}, one obtains $Z_2 \in \cC$ such that 
		\begin{enumerate}
			\item[(i)''] $Z_2 \simeq \Cone(E_2 \to Y)$ for some $E_2 \in \cD$, and
			\item[(ii)''] there is a morphism $\gamma \in \Hom_{\cC}^0\left(X\oplus A_0, Z_2\right)$ so that $l(\gamma) \simeq \delta$.
		\end{enumerate}
		
		Let 
		\[E_3 := \Cone\left(X\oplus A_0 \xrightarrow{\gamma}Z_2\right)\in\cC.\]
		Then, since $l(\gamma) \simeq \delta$ is a homotopy equivalence, 
		\[l(E_3) = l\left(\Cone(\gamma)\right) \simeq \Cone(\delta)\simeq 0.\]
		In other words, $E_3$ is a direct summand of an object in $\cD$ (see e.g. \cite{Krause}), i.e., $E_3\oplus A''\in\cD$ for some $A''\in\cC$.
		
		Collecting all the above results, we get 
		\begin{align}
			\notag
			X\oplus A_0 &\simeq \Cone\left(Z_2[-1] \to E_3[-1]\right) \\
			\notag
			&\simeq \Cone\left(\Cone(E_2 \to Y)[-1] \to E_3[-1]\right) \\
			\notag	
			&\simeq \Cone\left(\Cone\left(E_2 \to \Cone(W \xrightarrow{\alpha} Z_1)\right)[-1] \to E_3[-1] \right) \\
			\label{eqn induction}
			&\simeq \Cone\left(\Cone\left(E_2 \to \Cone\left(W \xrightarrow{\alpha} \Cone(E_1 \to G[d])\right)\right)[-1] \to E_3[-1]\right) .
		\end{align}
		As mentioned in Remark \ref{rmk iterated cone}\eqref{item:iterated}, one can convert the iterated cone \eqref{eqn induction} to a twisted complex for $X\oplus A_0$
		\[X\oplus A_0\simeq \left[\, E_2[1]\to W[1]\to E_1[1]\to G[d] \to E_3[-1]\,\right] .\]
		Here, the notation for the twisted complex is as in Remark \ref{rmk iterated cone}\eqref{item:notation}.
		Note that $E_1,E_2,E_3\oplus A''\in\cD$, and since $D$ split-generates $\cD$, there exist $A_1,A_2,A_3\in\cD\subset\cC$ such that
		\[E_1\oplus A_1,\quad E_2\oplus A_2,\quad E_3\oplus A''\oplus A_3\quad \in\Tw(\{D\}_{\cC}) .\]
		Also, recall that $\tilde S\in\Tw(\{G,D\}_{\cC})$ satisfies $\tilde S\simeq W\oplus A'$ for some $A'\in\cC$, and $\tilde S$ has the same $G$-components as $\tilde T$. Set $A:= A_0\oplus A_1\oplus A_2\oplus A_3\oplus A'\oplus A''\in\cC$. Then, define $\tilde K\in\Tw(\{G,D\}_{\cC})$ by
		\begin{align*}
			X\oplus A&\simeq \left[ (E_2\oplus A_2)[1]\to (W\oplus A')[1]\to (E_1\oplus A_1)[1]\to G[d] \to (E_3\oplus A''\oplus A_3)[-1]\right]\\
			&\simeq \left[ (E_2\oplus A_2)[1]\to \tilde S[1]\to (E_1\oplus A_1)[1]\to G[d] \to (E_3\oplus A''\oplus A_3)[-1]\right]:=\tilde K .
		\end{align*}
		Hence, the $G$-components of $\tilde K$ consist of the $G$-components of $\tilde S[1]$ and $G[d]$.
		We also know that the $G$-components of $\tilde L$ consist of the $G$-components of $\tilde T[1]$ and $G[d]$ by \eqref{eq:l-tilde}. Since $G$-components of $\tilde S$ and $\tilde T$ are the same, this proves the induction step.
	\end{proof}

	\subsection{Comparison of categorical entropies of $\Phi_\cC, \Phi_\cD$, and $\Phi_{\cC/\cD}$.}
	\label{subsection Theorem entropy-loc}
	Now, we are ready to compare the categorical entropies of an $A_{\infty}$/dg functor $\Phi_\cC$ on a pretriangulated $A_{\infty}$/dg category $\cC$, its restriction $\Phi_\cD$ to a full pretriangulated $A_{\infty}$/dg subcategory $\cD$ of $\cC$, and its descent $\Phi_{\cC/\cD}$ to the quotient $\cC/\cD$.
	
	
	It is easy to prove the next proposition, which is the first result comparing the categorical entropies of $\Phi_\cC$ and $\Phi_{\cC/\cD}$.
	
	\begin{prop}\label{prp:entropy-loc}
		Let $\cC$ be a pretriangulated $A_{\infty}$ (or dg) category and $\cD$ be its full pretriangulated $A_{\infty}$ (or dg) subcategory.
		Let $\Phi_{\cC}\colon \cC\to \cC$ be an $A_{\infty}$ (or dg) functor satisfying
		$\Phi_{\cC}(\cD)\subset \cD$, and $\Phi_{\cC/\cD}\colon\cC/\cD\to\cC/\cD$ be the induced functor as in Definition \ref{dfn:loc-induced-functor}. 
		Then, for any $t\in\R$, we have
		\[h_t(\Phi_{\cC/\cD})\leq h_t(\Phi_{\cC}) .\]
	\end{prop}
	
	\begin{proof}	
		Assume that $\overline G:=G\oplus D$ split-generates $\cC$, and $D$ split-generates $\cD$ for some $G\in\cC$ and $D\in\cD$. Then by Proposition \ref{prop split-generators of quotient}, $G$ split-generates $\cC/\cD$.
		For a fixed $n \in \mathbb N$, let $\tilde K\in\Tw(\{G,D\}_{\cC})$ be a twisted complex for $\Phi^n_{\cC}(\overline G)\oplus C$ for some $C\in\cC$. Applying the localisation functor $l$ to $\tilde K$, and replacing $D$ with $0$, we get a twisted complex $\tilde L\in\Tw(\{G\}_{\cC/\cD})$ for
		\[l(\Phi^n_{\cC}(\overline G)\oplus C)\simeq\Phi^n_{\cC/\cD}(l(\overline G))\oplus l(C)\simeq\Phi^n_{\cC/\cD}(G)\oplus l(C) .\]
		Moreover, the $G$-components of $\tilde K$ are preserved in the new twisted complex $\tilde L$. 
		Thus,
		\[\len_t\tilde L=\len_{t,G} \tilde K\leq \len_t\tilde K.\]
		This shows that
		\[\delta_t(G;\Phi_{\cC/\cD}^n(G))\leq \delta_t(G,D;\Phi_{\cC}^n(\overline G)),\]
		which implies that
		\[h_t(\Phi_{\cC/\cD})\leq h_t(\Phi_{\cC})\]
		by Proposition \ref{prp:split-entropy}.
	\end{proof}

	Proposition \ref{prp:entropy-loc} gives a lower bound for the entropy of $\Phi_\cC$. The following, which is one of our main theorems, gives an upper bound.
	
	\begin{thm}\label{thm:entropy-loc}
		Let $\cC$ be a pretriangulated $A_{\infty}$ (or dg) category and $\cD$ be its full pretriangulated $A_{\infty}$ (or dg) subcategory.
		Let $\Phi_{\cC}\colon \cC\to \cC$ be an $A_{\infty}$ (or dg) functor satisfying
		$\Phi_{\cC}(\cD)\subset\cD,$
		and let
		$\Phi_{\cD}\colon \cD\to \cD$
		and
		$\Phi_{\cC/\cD}\colon\cC/\cD\to\cC/\cD$
		be the induced functors as in Definition \ref{dfn:loc-induced-functor}. Then, for any $t\in\R$, we have
		\[h_t(\Phi_{\cC/\cD})\leq h_t(\Phi_{\cC})\leq\max\{h_t(\Phi_{\cC/\cD}),h_t(\Phi_{\cD})\} .\]
	\end{thm}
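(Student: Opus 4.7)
The first inequality $h_t(\Phi_{\cC/\cD}) \leq h_t(\Phi_\cC)$ is exactly Proposition \ref{prp:entropy-loc}, so the work is in proving the second. Using the split-generator $\overline{G} = G \oplus D$ of $\cC$ (Proposition \ref{prp:split-entropy}), the decomposition $\Phi_\cC^n(\overline{G}) = \Phi_\cC^n(G) \oplus \Phi_\cD^n(D)$, and subadditivity of $\delta_t$ over direct sums, the estimate reduces to
\[\delta_t(G, D; \Phi_\cC^n(\overline{G})) \leq \delta_t(G, D; \Phi_\cC^n(G)) + \delta_t(D; \Phi_\cD^n(D)).\]
The second summand grows at rate $h_t(\Phi_\cD)$ by definition, so everything hinges on bounding the first.

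For the first, I would fix a large $m$ and build $(K_m, f_m)$ as follows. Take a near-optimal twisted complex in $\cC/\cD$ for $\Phi_{\cC/\cD}^m(G) \oplus C'_m$ with $G$-length close to $\delta_t(G; \Phi_{\cC/\cD}^m(G))$; using essential surjectivity of $l$ (Proposition \ref{prp:loc-ess-surj}), lift $C'_m$ to $\tilde{C}_m \in \Tw(\cC)$ and apply Lemma \ref{lem:g-preserving} to $\tilde{K} := \Phi_\cC^m(G) \oplus \tilde{C}_m$. This yields a twisted complex $(K_m, f_m)$ in $\cC$ representing $\Phi_\cC^m(G) \oplus \tilde{C}_m$ whose $G$-length $a_m$ satisfies $\tfrac{1}{m} \log a_m \to h_t(\Phi_{\cC/\cD})$ as $m \to \infty$, while its $D$-length $b_m$ is some finite but \emph{a priori} uncontrolled quantity.

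The key step is an iteration. Given $(K_{(k-1)m}, f_{(k-1)m})$, I apply $\Phi_\cC^m$ — which preserves twisted-complex structure by the proposition at the start of Section \ref{section twisted complex formulation} — turning its $G$- and $D$-components into $\Phi_\cC^m(G)$- and $\Phi_\cD^m(D)$-components respectively. I then replace each $\Phi_\cC^m(G)$-component by a shifted copy of $(K_m, f_m)$ and each $\Phi_\cD^m(D)$-component by a near-optimal twisted complex in $\cD$ with $D$-length at most $D_m := \delta_t(D; \Phi_\cD^m(D)) + \epsilon$. Letting $a_{km}, b_{km}$ denote the $G$- and $D$-lengths of the resulting $(K_{km}, f_{km})$ (with $a_0 = 1$, $b_0 = 0$), one obtains
\[a_{km} \leq a_m^k, \qquad b_{km} \leq a_{(k-1)m}\, b_m + b_{(k-1)m}\, D_m.\]
Unrolling the second recursion gives $b_{km} \leq k\, b_m \max(a_m, D_m)^{k-1}$, hence
\[\delta_t(G, D; \Phi_\cC^{km}(G)) \leq a_{km} + b_{km} \leq (1 + k b_m) \max(a_m, D_m)^k.\]
Taking $\tfrac{1}{km}\log$ and sending $k \to \infty$ gives an upper bound of $\tfrac{1}{m} \log \max(a_m, D_m)$; then sending $m \to \infty$ (and $\epsilon \to 0$) yields $\max\{h_t(\Phi_{\cC/\cD}), h_t(\Phi_\cD)\}$. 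For general $n$, write $n = km + r$ with $0 \leq r < m$ and use a similar one-step composition to absorb the $r$-contribution into a bounded multiplicative constant that disappears under $\tfrac{1}{n}\log$ as $n \to \infty$.

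The main obstacle is the absence of any direct size control on $b_m$ coming from Lemma \ref{lem:g-preserving}: the lemma guarantees existence of the lift but not a bound on the added $D$-components. The iterative construction circumvents this by treating $b_m$ merely as a fixed multiplicative prefactor — one which is eventually overwhelmed by the exponential $\max(a_m, D_m)^k$ in the limit $k \to \infty$, so that only the rates $\tfrac{1}{m}\log a_m$ and $\tfrac{1}{m}\log D_m$ survive in the final bound.
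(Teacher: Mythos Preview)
Your argument is correct, and it takes a cleaner route than the paper's. Both proofs rest on the same key tool --- Lemma \ref{lem:g-preserving} --- to lift a near-optimal twisted complex for $\Phi_{\cC/\cD}^m(G)$ from $\cC/\cD$ back to $\cC$, at the price of an uncontrolled $D$-length; and both then iterate the functor and substitute, so that this uncontrolled quantity enters only as a multiplicative prefactor which is killed by the exponential growth. The difference is organizational. The paper doubles the exponent at each stage ($2^iN$), compares the resulting length sequence $a'_i$ against a near-optimal sequence $a_i$ for $\Phi_\cC^{2^iN}(\overline G)$, and derives a contradiction via the auxiliary ratio Lemma \ref{lem:sequences}, treating the finite/infinite entropy cases separately. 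Your fixed-step recursion $a_{km}=a_m^k$, $b_{km}\le a_{(k-1)m}b_m+b_{(k-1)m}D_m$ is more transparent: it unrolls explicitly and yields the bound by a direct two-parameter limit ($k\to\infty$ then $m\to\infty$), bypassing both the contradiction and Lemma \ref{lem:sequences}. Your approach also absorbs the $-\infty$ cases uniformly (when $a_m=0$ the recursion degenerates to $b_{km}\le b_m D_m^{k-1}$, and when $h_t(\Phi_\cD)=-\infty$ one simply chooses $D_m$ below any prescribed rate), whereas the paper handles these in separate steps. The paper's argument buys nothing extra; yours is the more economical proof.
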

	
	\begin{rmk}
	More generally, Theorem \ref{thm:entropy-loc} holds if we replace the pretriangulated $A_{\infty}$/dg categories $\cC$ and $\cD$ with triangulated categories, and the $A_{\infty}$/dg functor $\Phi_{\cC}$ with an exact functor. See Remark \ref{rmk:motivation}\eqref{rmk:triangulated-cat} for more details.
	\end{rmk}
	
	The idea of the proof of Theorem \ref{thm:entropy-loc} is roughly as follows: Given the setting of Proposition \ref{prop split-generators of quotient}, where $\cC/\cD$ is split-generated by $G\in\cC$, $\cD$ is split-generated by $D\in\cD$, and $\cC$ is split-generated by $\overline G:=G\oplus D$, the initial goal is to achieve the inequality
		\begin{equation}\label{eq:max-inequality}
			\delta_t(G,D;\Phi^n_{\cC}(\overline G))\leq \delta_t(G;\Phi^n_{\cC/\cD}(G)) + \delta_t(D;\Phi^n_{\cD}(D))
		\end{equation}
		for sufficiently large $n$, which would imply the inequality $h_t(\Phi_{\cC})\leq \max\{h_t(\Phi_{\cC/\cD}),h_t(\Phi_{\cD})\}$.
	
	An attempt would be to use Lemma \ref{lem:g-preserving}: For any twisted complex $\tilde L_n\in\Tw(\{G\}_{\cC/\cD})$ for $\Phi^n_{\cC/\cD}(G)\oplus B_n$ for some $B_n\in\cC/\cD$, there is a twisted complex $\tilde K_n\in\Tw(\{G,D\}_{\cC})$ for $\Phi^n_{\cC}(\overline G)\oplus A_n$ for some $A_n\in\cC$ such that $\len_{t,G}\tilde K_n=\len_t\tilde L_n$. Then we have
		\[\delta_t(G,D;\Phi^n_{\cC}(\overline G))\leq \len_t \tilde K_n=\len_{t,G}\tilde K_n+\len_{t,D}\tilde K_n=\len_t\tilde L_n+\len_{t,D}\tilde K_n .\]
		We can choose $\tilde L_n$ so that $\len_t\tilde L_n$ is close to $\delta_t(G;\Phi^n_{\cC/\cD}(G))$. Assuming that $\len_{t,D}\tilde K_n$ approaches $R_n\in[0,\infty]$ as $\len_t\tilde L_n$ approaches $\delta_t(G;\Phi^n_{\cC/\cD}(G))$, we get
		\[\delta_t(G,D;\Phi^n_{\cC}(\overline G))\leq \delta_t(G;\Phi^n_{\cC/\cD}(G)) + R_n .\]
		However, we have no control over $R_n$ via Lemma \ref{lem:g-preserving}. In particular, we cannot relate $R_n$ to $\delta_t(D;\Phi^n_{\cD}(D))$ using the lemma to achieve \eqref{eq:max-inequality}.
	
	The solution of this problem is the observation that the following also gives the entropy:
		\begin{equation}\label{eq:entropy-via-quotient}
			\frac{1}{n}\log\left(\frac{\delta_t(G,D;\Phi^{2n}_{\cC}(\overline G))}{\delta_t(G,D;\Phi^{n}_{\cC}(\overline G))}\right)\xrightarrow{n\to\infty}h_t(\Phi_{\cC}) .
		\end{equation}
		For that reason, we want to create a sequence of twisted complexes $\tilde K_i\in\Tw(\{G,D\}_{\cC})$ for $\Phi^{2^iN}_{\cC}(\overline G)\oplus A_i$ for some $A_i\in\cC$ such that approximately
		\begin{equation}\label{eq:quotient-inequality}
			\frac{\delta_t(G,D;\Phi^{2^{i+1}N}_{\cC}(\overline G))}{\delta_t(G,D;\Phi^{2^iN}_{\cC}(\overline G))}\leq\frac{\len_t\tilde K_{i+1}}{\len_t\tilde K_i}\leq\delta_t(G;\Phi^{2^iN}_{\cC/\cD}(G))+\delta_t(D;\Phi^{2^iN}_{\cD}(D))
		\end{equation}
		holds for a large $N$. Similar to \eqref{eq:max-inequality}, this would also imply Theorem \ref{thm:entropy-loc} by \eqref{eq:entropy-via-quotient}. The first inequality is approximately true for infinitely many $i$'s by a basic fact about sequences (given in the following Lemma \ref{lem:sequences}), since $\delta_t(G,D;\Phi^{2^iN}_{\cC}(\overline G))\leq\len_t\tilde K_i$.
	
	To achieve the second inequality, we create the sequence $\tilde K_i\in\Tw(\{G,D\}_{\cC})$ in a particular way: Define $\tilde K_0$ from $\tilde L_N\in\Tw(\{G\}_{\cC/\cD})$ by Lemma \ref{lem:g-preserving} as we mentioned above, so that $\len_{t,G}\tilde K_0=\len_t\tilde L_N$. Define $\tilde K_{i+1}$ from $\tilde K_i$ by applying $\Phi^{2^iN}_{\cC}$ to $\tilde K_i$, and then by replacing each $\Phi^{2^iN}_{\cC}(G)$ in $\tilde K_{i+1}$ by $\tilde K_i$ and each $\Phi^{2^iN}_{\cC}(D)=\Phi^{2^iN}_{\cD}(D)$ in $\tilde K_{i+1}$ by a twisted complex in $\{D\}_{\cD}$ whose length is close to $\delta_t(D;\Phi^{2^iN}_{\cD}(D))$. Note that $\len_{t,G}\tilde K_i=(\len_t\tilde L_N)^{2^i}$ holds. Hence, we approximately have
	\[\frac{\len_t\tilde K_{i+1}}{\len_t\tilde K_i}=\frac{\len_{t,G}\tilde K_i\cdot \len_t\tilde K_i +\len_{t,D}\tilde K_i\cdot \delta_t(D;\Phi^{2^iN}_{\cD}(D))}{\len_t\tilde K_i}\leq (\len_t\tilde L_N)^{2^i}+\delta_t(D;\Phi^{2^iN}_{\cD}(D)) .\]
	Since $(\len_t\tilde L_N)^{2^i}$ is approximately $\delta_t(G;\Phi^{2^iN}_{\cC/\cD}(G))$, this proves the second inequality in \eqref{eq:quotient-inequality}, and hence, Theorem \ref{thm:entropy-loc}. The rigorous proof of Theorem \ref{thm:entropy-loc} will appear after Lemma \ref{lem:sequences}.

	\begin{lem}\label{lem:sequences}
		
		Let $(a_i)_{i=0}^{\infty}$ and $(a'_i)_{i=0}^{\infty}$ be sequences of positive real numbers satisfying $a_i\leq a'_i$ for all $i$. Then we have
		\[\liminf_{i\to\infty}\frac{a_{i+1}/a_i}{a'_{i+1}/a'_i}\leq 1 .\]
	\end{lem}
	\begin{proof}
		Let us assume the contrary. 
		Then, there exist a real number $M>1$ and a natural number $N$ such that
		\[\frac{a_{i+1}/a_{i}}{a'_{i+1}/a'_{i}}\geq M\]
		for all $i\geq N$.
		However, for any $i>N$, we have
		\begin{gather*}
			\frac{a_N}{a'_N}\left(\prod_{j=N}^{i-1}\left(\frac{a_{j+1}/a_j}{a'_{j+1}/a'_j}\right)\right)
			= \frac{a_i}{a'_i}\leq 1
		\end{gather*}
		which gives a contradiction since
		\[\lim_{i\to\infty} \left(\prod_{j=N}^{i-1}\frac{a_{j+1}/a_j}{a'_{j+1}/a'_j}\right) = \infty .\]
	\end{proof}

	\begin{proof}[Proof of Theorem \ref{thm:entropy-loc}]
		
		Assume that $\overline G:=G\oplus D$ split-generates $\cC$, and $D$ split-generates $\cD$ for some $G\in\cC$ and $D\in\cD$. Then by Proposition \ref{prop split-generators of quotient}, $G$ split-generates $\cC/\cD$.
		The first inequality is Proposition \ref{prp:entropy-loc}, thus it is enough to prove the second inequality 
		\[h_t(\Phi_{\cC}) \leq \max\{h_t(\Phi_{\cC/\cD}), h_t(\Phi_\cD)\}.\]
		If $h_t(\Phi_\cC) = - \infty$, then the above holds.
		Thus, let us assume that $h_t(\Phi_\cC) > -\infty$. 
		
		We note that, in the rest of the proof, we use the following notation for convenience:
		\[\alpha := h_t(\Phi_\cC),\quad \beta := h_t(\Phi_{\cC/\cD}),\quad \gamma := h_t(\Phi_\cD).\]
		Moreover, we remark that we will be working with the twisted complexes $\tilde K_n\in\Tw(\{G,D\}_{\cC})$, $\tilde L_n\in\Tw(\{G\}_{\cC/\cD})$, and $\tilde M_n\in\Tw(\{D\}_{\cD})$ in the rest of the proof.
		\vskip0.1in 
		
		{\em Step 1}: We will relate the complexity $\delta_t(G,D;\Phi^n_{\cC}(\overline G))$ to $\alpha$.
		
		Let $\epsilon$ be a fixed positive real number. 
		Then, by Proposition \ref{prp:split-entropy}, there is a natural number $N_1$ such that for all $n \geq N_1$, the following holds:
		\begin{equation*}
			\left| \frac{1}{n} \log \delta_t(G,D; \Phi_\cC^n(\overline G)) - \alpha \right| < \epsilon.
		\end{equation*}
		
		Thus, for any $n\geq N_1$, the following holds:
		\begin{gather}
			\label{eqn for K_n} e^{n (\alpha - \epsilon)} < \delta_t(G,D; \Phi_\cC^n(\overline G)) < e^{n(\alpha + \epsilon)}.
		\end{gather}
		
		The second step is to choose twisted complexes $\tilde L_n\in\Tw(\{G\}_{\cC/\cD})$ and $\tilde M_n\in\Tw(\{D\}_{\cD})$, whose lengths are related to $\beta$ and $\gamma$ respectively.
		For the choices, we consider three different cases, the first case is $\beta > -\infty, \gamma > - \infty$, the second case is $\beta = -\infty, \gamma = -\infty$, and the third case is that one of $\{\beta, \gamma\}$ is $-\infty$ and the other is not.
		We will consider the first case in steps 2 -- 4, the second case in step 5, and the third case in step 6.
		\vskip0.1in 
		
		{\em Step 2}: As mentioned above, we assume that $\beta > - \infty, \gamma > - \infty$. 
		Then, for the fixed $\epsilon>0$ in step 1, there is a natural number $N_2$ (resp.\ $N_3$) such that 
		\begin{gather*}
			\left| \frac{1}{n} \log \delta_t(G; \Phi_{\cC/\cD}^n(G)) - \beta \right| < \epsilon, \text{  for all  } n \geq N_2, \text{  and  }
			\left| \frac{1}{n} \log \delta_t(D; \Phi_{\cD}^n(D)) - \gamma \right| < \epsilon, \text{  for all  } n \geq N_3.
		\end{gather*}
		
		Moreover, for any $n \geq N_2$ (resp.\ $n\geq N_3$), there is a twisted complex $\tilde L_n\in\Tw(\{G\}_{\cC/\cD})$ (resp.\ $\tilde M_n\in\Tw(\{D\}_{\cD})$) such that 
		\begin{itemize}
			\item $\tilde L_n$ is a twisted complex for $\Phi^n_{\cC/\cD}(G) \oplus B_n$ for some $B_n\in\cC/\cD$ (resp.\ $\tilde M_n$ is a twisted complex for $\Phi^n_{\cD}(D) \oplus C_n$ for some $C_n\in\cD$),
			\item the following hold:
			\begin{gather}
				\label{eqn for L_n} \len_t\tilde L_n < e^{n(\beta + \epsilon)}, \\
				\label{eqn for M_n} \len_t\tilde M_n < e^{n(\gamma + \epsilon)}.
			\end{gather}
		\end{itemize}
		\vskip0.1in
		
		{\em Step 3}: 
		We fix a sufficiently large integer $N$ such that 
		\[N \geq \max\{N_1, N_2, N_3\}.\]
		Then, we will construct a sequence of twisted complexes $\tilde K_i\in\Tw(\{G,D\}_{\cC})$ (which will be compared to $\delta_t(G,D; \Phi_\cC^{2^iN}(\overline G))$ as described right above of Lemma \ref{lem:sequences}) so that 
		\begin{enumerate}
			\item[(i)] $\tilde K_i$ is a twisted complex for $\Phi_\cC^{2^iN}(\overline G) \oplus A_i$ for some $A_i\in\cC$,
			\item[(ii)] the following equalities hold:
			\begin{align}
				\label{eqn condition for E_i}
				\len_{t,G}\tilde K_i &= (\len_t\tilde L_N)^{2^i} ,\\
				\label{eqn length of E_i}
				\len_t \tilde K_{i+1} &=\len_{t,G}\tilde K_i \cdot \len_t\tilde K_i + \len_{t,D}\tilde K_i \cdot \len_t\tilde M_{2^iN}.
			\end{align}
		\end{enumerate}
		
		We will choose $\tilde K_i$ inductively. 
		
		For the base case $i=0$, observe that
		\[\tilde L_N \simeq \Phi^N_{\cC/\cD}(G) \oplus B_N\simeq \Phi^N_{\cC/\cD}(l(\overline G)) \oplus B_N\simeq l(\Phi^N_{\cC}(\overline G))\oplus B_N .\]
		Then, by Lemma \ref{lem:g-preserving}, there exists a twisted complex $\tilde K_0\in\Tw(\{G,D\}_{\cC})$ such that
		\begin{itemize}
			\item $\tilde K_0$ is a twisted complex for $\Phi^N_\cC(\overline G) \oplus A_0$ for some $A_0\in\cC$,
			\item $\len_{t,G}\tilde K_0 = \len_t\tilde L_N$ .
		\end{itemize}
		We note that the last item is \eqref{eqn condition for E_i} for $i=0$.
		
		In order to choose $\tilde K_i$ for all $i \in \mathbb{N}$ inductively, let us assume that there is $\tilde K_i\in\Tw(\{G,D\}_{\cC})$ satisfying (i)--(ii) above.
		One can apply $\Phi_\cC^{2^iN}$ to the twisted complex $\tilde K_i$ to obtain a twisted complex
		\[\tilde K'_{i+1}\in\Tw(\{\Phi_\cC^{2^iN}(G),\Phi_\cC^{2^iN}(D)\}_{\cC})\]
		such that 
		\begin{itemize}
			\item $\tilde K_{i+1}'$ is a twisted complex for $\Phi_\cC^{2^iN}(\Phi_\cC^{2^iN}(\overline G) \oplus A_i) = \Phi_\cC^{2^{i+1}N}(\overline G) \oplus \Phi_\cC^{2^iN}(A_i)$,
			\item $\len_{t,\Phi_{\cC}^{2^iN}(G)}\tilde K_{i+1}'$ (resp.\ $\len_{t,\Phi_{\cC}^{2^iN}(D)}\tilde K_{i+1}'$) is equal to $\len_{t,G}\tilde K_i$ (resp.\ $\len_{t,D}\tilde K_i$).
		\end{itemize}
		
		We would like to construct a twisted complex $\tilde K_{i+1}\in\Tw(\{G,D\}_{\cC})$ by modifying $\tilde K_{i+1}'$ as follows:
		The components of $\tilde K_{i+1}'$ are shifts of $\Phi_\cC^{2^iN}(G)$ and $\Phi_\cC^{2^iN}(D)$.
		We replace each $\Phi_\cC^{2^iN}(D)$ in the components of $\tilde K_{i+1}'$ with
		\[\Phi_\cC^{2^iN}(D) \oplus C_{2^iN} = \Phi_\cD^{2^iN}(D) \oplus C_{2^iN}\]
		then replace it with the equivalent twisted complex $\tilde M_{2^iN}\in\Tw(\{D\}_{\cD})\subset\Tw(\{G,D\}_{\cC})$.
		Similarly, we replace each $\Phi_\cC^{2^iN}(G)$ in the components of $\tilde K_{i+1}'$ with 
		\[\Phi_\cC^{2^iN}(G) \oplus \Phi_\cC^{2^iN}(D) \oplus A_i = \Phi_\cC^{2^iN}(\overline G) \oplus A_i\]
		then replace it with the equivalent twisted complex $\tilde K_i\in\Tw(\{G,D\}_{\cC})$.
		
		Since the replacements can be understood as taking direct sums with $C_{2^iN}$'s and $\Phi_\cC^{2^iN}(D) \oplus A_i$'s, we get a twisted complex $\tilde K_{i+1}\in\Tw(\{G,D\}_{\cC})$ for $\Phi_\cC^{2^{i+1}N}(\overline G) \oplus A_{i+1}$ for some $A_{i+1}\in\cC$ after the replacements.
		Also, the following hold:
		\[\len_{t,G}\tilde K_{i+1}=\len_{t,\Phi_{\cC}^{2^iN}(G)}\tilde K_{i+1}'\cdot\len_{t,G}\tilde K_i = \len_{t,G}\tilde K_i\cdot \len_{t,G}\tilde K_i = (\len_t\tilde L_N)^{2^i} \cdot  (\len_t\tilde L_N)^{2^i}= (\len_t \tilde L_N)^{2^{i+1}},\]
		and
		\[\len_t \tilde K_{i+1} = \len_{t,\Phi_{\cC}^{2^iN}(G)}\tilde K_{i+1}'\cdot\len_t\tilde K_i + \len_{t,\Phi_{\cC}^{2^iN}(D)}\tilde K_{i+1}'\cdot \len_t\tilde M_{2^iN}= \len_{t,G}\tilde K_i \cdot \len_t\tilde K_i + \len_{t,D}\tilde K_i \cdot \len_t\tilde M_{2^iN}.\]
		Hence, (i)--(ii) holds for $\tilde K_{i+1}$. This concludes the construction of the sequence of twisted complexes $\tilde K_i\in\Tw(\{G,D\}_{\cC})$.
		\vskip0.1in
		
		{\em Step 4}: We will prove Theorem \ref{thm:entropy-loc} for the first case, i.e., the case of $\beta > -\infty, \gamma > -\infty$. 
		If we assume that $\beta \geq \gamma$, then we would like to prove that  
		\[ h_t(\Phi_\cC) = \alpha \leq \beta = \max\{h_t(\Phi_{\cC/\cD}) = \beta, h_t(\Phi_\cD) = \gamma\}.\]
		Based on this, we will find a contradiction under the assumptions that $\beta \geq \gamma$ and $\alpha > \beta$ using Lemma \ref{lem:sequences}.
		
		Consider the sequences
		\[a_i := \delta_t(G,D; \Phi_\cC^{2^iN}(\overline G)),\quad a_i' := \len_t\tilde K_i .\] 
		Obviously, we have $a_i\leq a'_i$ for all $i$ by the definition of the complexity. In order to study the sequence
		\[\frac{a_{i+1}/a_{i}}{a'_{i+1}/a'_{i}} ,\]
		we note that 
		\[e^{2^iN(\alpha - \epsilon)} < a_i < e^{2^iN(\alpha + \epsilon)},\quad \hspace{0.5em} e^{2^{i+1}N(\alpha - \epsilon)} < a_{i+1} < e^{2^{i+1}N(\alpha + \epsilon)},\]
		from \eqref{eqn for K_n}. 
		This induces that 
		\begin{gather}
			\label{eqn a_i fraction} e^{2^iN(\alpha-3\epsilon)}< \frac{a_{i+1}}{a_i} < e^{2^iN(\alpha+3\epsilon)}.
		\end{gather}
		
		We also note that for any $i$,
		\begin{gather}
			\label{eqn a'_i fraction}
			\begin{split}
				\frac{a'_{i+1}}{a'_i} &= \len_{t,G}\tilde K_i + \frac{\len_{t,D}\tilde K_i \cdot \len_t\tilde M_{2^{i}N}}{\len_t\tilde K_i} \\
				&\leq \len_t(\tilde L_N)^{2^i} + \len_t\tilde M_{2^iN} \\
				&< e^{2^iN(\beta + \epsilon)} + e^{2^iN(\gamma + \epsilon)} \\
				&=(1+e^{2^i N(\gamma-\beta)})e^{2^i N(\beta +\varepsilon)}\\
				&\leq 2 e^{2^iN(\beta + \epsilon)},
			\end{split}
		\end{gather}
		from \eqref{eqn for L_n}--\eqref{eqn length of E_i} and the assumption that $\beta \geq \gamma$.
		
		Since we assume that $\alpha > \beta$, by choosing a sufficiently small $\epsilon >0$ and a sufficiently large $N$, one can have 
		\[ N (\alpha -\beta - 4 \epsilon) > 1 .\]
		Then, \eqref{eqn a_i fraction} and \eqref{eqn a'_i fraction} show that
		\[\frac{a_{i+1}/a_{i}}{a'_{i+1}/a'_{i}} >\frac{1}{2}e^{2^{i}N(\alpha - 3 \epsilon) - 2^{i}N(\beta + \epsilon)}=\frac{1}{2}e^{2^{i}N(\alpha - \beta - 4\epsilon)}\]
		for all $i$. This implies
		\[\lim_{i\to\infty}\frac{a_{i+1}/a_{i}}{a'_{i+1}/a'_{i}}=\infty\]
		which contradicts with Lemma \ref{lem:sequences}. Thus, $\alpha \leq \beta$ if $\beta \geq \gamma$.
		\vskip0.1in
		
		%
		
		%
		
		If we assume that $\gamma > \beta$ instead of $\beta\geq\gamma$, then we should show that $\alpha \leq \gamma$. 
		Under the assumption that $\alpha > \gamma$, the same arguments give a contradiction again. 
		
		To sum up, steps 2--4 prove Theorem \ref{thm:entropy-loc} for the first case, i.e., $\beta>-\infty$ and $\gamma>-\infty$.
		\vskip0.1in
		
		{\em Step 5}: Here, we will consider the second case where
		\[\beta = \gamma = -\infty.\]
		We will show that $\alpha = -\infty$ by contradiction. 
		Since $\beta = h_t(\Phi_{\cC/\cD}) = -\infty$, for any $R \in \R$, there is a $N_2 \in \mathbb{N}$ such that for all $n \geq N_2$, 
		\[\frac{1}{n} \log \delta_t (G; \Phi^n_{\cC/\cD}(G)) < R.\]
		Furthermore, there is a twisted complex $\tilde L_n\in\Tw(\{G\}_{\cC/\cD})$ such that for any $\epsilon>0$,
		\begin{gather}
			\label{eqn new for L_n} \tag{\ref{eqn for L_n}$'$}
			\len_t\tilde L_n< e^{n(R + \epsilon)}.
		\end{gather}
		Similarly, there is $N_3 \in \mathbb{N}$ such that for all $n \geq N_3$, there exists a twisted complex $\tilde M_n\in\Tw(\{D\}_{\cD})$ so that for any $\epsilon>0$,
		\begin{gather}
			\label{eqn new for M_n} \tag{\ref{eqn for M_n}$'$}
			\len_t\tilde M_n< e^{n(R + \epsilon)}.
		\end{gather}	
		
		We can choose $R < \alpha$. 
		Then, by repeating the arguments in steps 2--4 with slight modifications, one can prove Theorem \ref{thm:entropy-loc} for the second case. 
		The modifications are using \eqref{eqn new for L_n} and \eqref{eqn new for M_n}, instead of \eqref{eqn for L_n} and \eqref{eqn for M_n}, and are replacing both of $\beta$ and $\gamma$ with $R$.
		\vskip0.1in
		
		{\em Step 6}: Here, we will consider the third case, i.e., only one of $\{\beta, \gamma\}$ is $-\infty$.
		For convenience, let us assume that $\beta = -\infty < \gamma$. 
		Then, the arguments in steps 2--4 will work after slight modification, as we did in step 5.
		The slight modifications are using \eqref{eqn new for L_n} instead of \eqref{eqn for L_n}, and replacing $\beta$ with a sufficiently small $R$. 
		
		When $\beta > -\infty = \gamma$, the same logic works. 
	\end{proof}
	
	We end this subsection by pointing out that under some assumptions, the categorical entropies of $\Phi_{\cC}$ and $\Phi_{\cC/\cD}$ agree with each other:
	\begin{cor}\label{cor:entropy-loc}
		Let $\cC$ be a pretriangulated $A_{\infty}$ (or dg) category and $\cD$ be its full pretriangulated $A_{\infty}$ (or dg) subcategory.
		Let $\Phi_{\cC}\colon \cC\to \cC$ be an $A_{\infty}$ (or dg) functor satisfying
		$\Phi_{\cC}(\cD)\subset\cD,$
		and let
		$\Phi_{\cD}\colon \cD\to \cD$
		and
		$\Phi_{\cC/\cD}\colon\cC/\cD\to\cC/\cD$
		be the induced functors as in Definition \ref{dfn:loc-induced-functor}.
		If $\Phi_{\cC/\cD}$ is a quasi-equivalence and $h_0(\Phi_{\cD})=0$, then
		\[h_0(\Phi_{\cC/\cD})=h_0(\Phi_{\cC}) .\]
	\end{cor}
%
	
	
	\subsection{Categorical entropy via admissible subcategories}
	\label{subsection when D is admissible}
	
	
	In this subsection, let $\cC$ be a pretriangulated $A_{\infty}$ (or dg) category, and $\cD$ be its full pretriangulated $A_{\infty}$ (or dg) subcategory. In order to make an inequality in Theorem \ref{thm:entropy-loc} an equality, we will assume that $\cD$ is {\em admissible}. 
	See Definition \ref{dfn:orthogonalcomplement} for the definition of admissible subcategory. 
	Our motivation of the assumption will be given in the next subsection. Note that the results in this subsection also hold for triangulated categories and exact functors between them, as pointed out in Remark \ref{rmk:motivation}\eqref{rmk:triangulated-cat}.
	
	\begin{dfn}\label{dfn:orthogonalcomplement}
		\mbox{}
		\begin{enumerate}
			\item The {\em right (resp.\ left) orthogonal complement $\mathcal{D}^{\perp}$} (resp. $^{\perp}\mathcal{D}$) of $\mathcal{D}$ is the full pretriangulated $A_{\infty}$ (or dg) subcategory of $\mathcal{C}$, consisting of objects $K \in \cC$ such that 
			\[ \Hom^*( L,K) = 0,\forall L \in \mathcal{D} \hspace{0.5em} (\mathrm{resp.\ } \Hom^*(K,L) =0, \forall L \in \mathcal{D} ). \]
			\item A full pretriangulated $A_{\infty}$ (or dg) subcategory $\mathcal{D}$ of a pretriangulated $A_{\infty}$ (or dg) category $\mathcal{C}$ is said to be {\em right-admissible (resp. left-admissible)} if for any $L\in \mathcal{C}$, there is a distinguished triangle
			\[ L' \rightarrow L \rightarrow L '' \rightarrow L'[1] \]
			for some $L' \in \mathcal{D}$ and $L'' \in \mathcal{D}^{\perp}$ (resp.\ for some $L' \in ^{\perp} \mathcal{D}$ and $L'' \in \mathcal{D}$) and such a triangle is unique up to unique isomorphism.
		\end{enumerate}
	\end{dfn}

	It is an easy consequence of Definition \ref{dfn:orthogonalcomplement} that, for any right-admissible (resp. left-admissible) subcategory $\mathcal{D}$ of $\mathcal{C}$, its right (resp.\ left) orthogonal complement $\mathcal{D}^{\perp}$ (resp.\ $^{\perp} \mathcal{D}$) is left-admissible (resp.\ right-admissible) and the left (resp.\ right) orthogonal complement $^{\perp} (\mathcal{D}^{\perp})$ (resp.\ $(^{\perp} \mathcal{D})^{\perp}$) of $\mathcal{D}^{\perp}$ (resp.\ $^{\perp} \mathcal{D}$) is $\mathcal{D}$.

	Let us consider the composition of the inclusion $ \mathcal{D}^{\perp} \hookrightarrow \mathcal{C}$ (resp.\ $^{\perp} \mathcal{D} \hookrightarrow \mathcal{C}$) and the localization functor $l: \mathcal{C} \to \mathcal{C}/\mathcal{D}$, which we will denote by $l_{\mathcal{D}^{\perp}}$ (resp.\ $l_{^{\perp} \mathcal{D}}$). It was remarked in \cite{Drinfeld} that a subcategory $\mathcal{D}$ of $\mathcal{C}$ is right-admissible (resp.\ left-admissible) if and only if $\mathcal{D}$ is split-closed and the functor $l_{\mathcal{D}^{\perp}} : \mathcal{D}^{\perp} \to \mathcal{C}/\mathcal{D}$ (resp.\ $l_{^{\perp} \mathcal{D}} : ^{\perp} \mathcal{D} \to \mathcal{C}/\mathcal{D}$) is an equivalence.
	
	Now assume that an $A_{\infty}$/dg subcategory $\mathcal{D}$ of an $A_{\infty}$/dg category $\mathcal{C}$ is right-admissible and hence that $\mathcal{D}^{\perp}$ is a left-admissible subcategory of $\mathcal{C}$. Then the above observations imply that the functor 
	\[l_\mathcal{D} : \mathcal{D} = ^{\perp} (\mathcal{D}^{\perp}) \to \mathcal{C}/(\mathcal{D}^{\perp})\]
	is an equivalence. Let us denote by 
	\[p : \mathcal{C} \to \mathcal{D} \hspace{0.5em} \mathrm{(resp.\  }q :\mathcal{C} \to \mathcal{D}^{\perp})\] 
	the right (resp.\ left) adjoint of the inclusion 
	\[i : \mathcal{D} \to \mathcal{C} \hspace{0.5em} \mathrm{  (resp.\  }j : \mathcal{D}^{\perp} \to \mathcal{C})\]
	whose existence can be proven using the definition of admissibility.
	We note that 
	\[p(L) \simeq L' \hspace{0.5em} \mathrm{  (resp.\  } q(L) \simeq L''[-1]),\]
	where $L, L', L''$ are the same as in Definition \ref{dfn:orthogonalcomplement}.

	Furthermore assume that $\Phi_{\mathcal{C}} : \mathcal{C} \to \mathcal{C}$ is an $A_{\infty}$/dg functor. Then we define its induced functor $\Phi_{\mathcal{D}^{\perp}} : \mathcal{D}^{\perp} \to \mathcal{D}^{\perp}$ by
	\[  \Phi_{\mathcal{D}^{\perp}}    = q \circ \Phi_{\mathcal{C}} \circ j.\] 
	Analogously, for a left-admissible $A_{\infty}$/dg subcategory $\mathcal{D}$ of $\mathcal{C}$, one can define the induced functor 
	$\Phi_{^{\perp}\mathcal{D}}: ^{\perp} \mathcal{D} \to ^{\perp} \mathcal{D}$.
	
	Then we are ready to prove the following corollary of Theorem \ref{thm:entropy-loc}.
	\begin{cor}\label{cor:orthogonal}
		Let $\mathcal{D}$ be a right-admissible (resp.\ left-admissible) $A_{\infty}$ (or dg) subcategory of $\mathcal{C}$.
		Let $\Phi_{\mathcal{C}} : \mathcal{C} \to \mathcal{C}$ be an $A_{\infty}$ (or dg) functor satisfying $\Phi_{\mathcal{C}} (\mathcal{D} )\subset \mathcal{D}$. Then, for any $t \in \R$, we have
		\begin{equation}\label{equality orthogonal}
			h_t(\Phi_ {\mathcal{C}})  = \mathrm{max} \{ h_t(\Phi_{\mathcal{D}}), h_t(\Phi_{\mathcal{D}^{\perp}}) \} \hspace{0.5em} (\mathrm{resp.\  }  h_t(\Phi_ {\mathcal{C}})  = \mathrm{max} \{ h_t(\Phi_{\mathcal{D}}), h_t(\Phi_{^{\perp} \mathcal{D}}) \}  ). 
		\end{equation} 
	\end{cor}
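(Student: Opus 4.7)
The plan is to reduce the corollary to Theorem \ref{thm:entropy-loc} via the equivalence $\cC/\cD\simeq\cD^\perp$ supplied by right-admissibility (the left-admissible case being symmetric), and then to supplement it with a separate lower bound $h_t(\Phi_\cD)\leq h_t(\Phi_\cC)$ obtained by pushing a near-optimal twisted complex through the right adjoint $p\colon\cC\to\cD$.

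First I would verify that, under the dg equivalence $l_{\cD^\perp}\colon\cD^\perp\to\cC/\cD$, the functor $\Phi_{\cD^\perp}=q\circ\Phi_\cC\circ j$ corresponds to the induced functor $\Phi_{\cC/\cD}$ of Proposition \ref{prp:loc-induced-functor}. Indeed, for any $L\in\cD^\perp$ the canonical triangle relating $\Phi_\cC(L)$ to its $\cD$-part $p(\Phi_\cC(L))$ and its $\cD^\perp$-part $q(\Phi_\cC(L))$ collapses after applying $l$ to an isomorphism $l(\Phi_\cC(L))\simeq l_{\cD^\perp}(\Phi_{\cD^\perp}(L))$, which is exactly the intertwining relation characterising $\Phi_{\cC/\cD}$. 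Hence $h_t(\Phi_{\cC/\cD})=h_t(\Phi_{\cD^\perp})$, and Theorem \ref{thm:entropy-loc} directly furnishes
\[h_t(\Phi_{\cD^\perp})\leq h_t(\Phi_\cC)\leq \max\{h_t(\Phi_{\cD^\perp}),h_t(\Phi_\cD)\}.\]

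It remains to establish the reverse-type inequality $h_t(\Phi_\cD)\leq h_t(\Phi_\cC)$. Fix split-generators $D$ of $\cD$ and $\overline G=G\oplus D$ of $\cC$. For each $n$ I would choose an almost-optimal twisted complex $(K,f)$ in $\cC$ for $\Phi_\cC^n(\overline G)\oplus C$ with $\overline G$-components. Because $\Phi_\cC(\cD)\subseteq\Tw(\cD)$, one has $\Phi_\cC^n(\overline G)\simeq\Phi_\cC^n(G)\oplus\Phi_\cD^n(D)$, so $(K,f)$ is also a twisted complex for $\Phi_\cD^n(D)\oplus C'$ with $C'=C\oplus\Phi_\cC^n(G)$. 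Applying the dg right adjoint $p$ termwise yields a twisted complex $(p(K),p(f))$ for $\Phi_\cD^n(D)\oplus p(C')$ whose components are shifts of $p(G)$ and of $D$ (using $p|_\cD=\mathrm{id}_\cD$). Replacing each occurrence of $p(G)[n_i]$ by a fixed twisted complex in $\cD$ with $D$-components realising the finite constant $\rho:=\delta_t(D;p(G))$ --- which depends only on $p(G)$ and $t$, not on $n$ --- I obtain a twisted complex with $D$-components only, representing $\Phi_\cD^n(D)\oplus C''$, whose length is at most
\[\rho\cdot\len_{t,G}(K,f)+\len_{t,D}(K,f)\leq \max(\rho,1)\cdot\len_t(K,f).\]
This shows $\delta_t(D;\Phi_\cD^n(D))\leq\max(\rho,1)\cdot\delta_t(\overline G;\Phi_\cC^n(\overline G))$, and passing to $\tfrac{1}{n}\log$ in the limit gives $h_t(\Phi_\cD)\leq h_t(\Phi_\cC)$. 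Together with the previous display this forces the equality \eqref{equality orthogonal}.

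The delicate point I anticipate is the termwise application of $p$: one must know that $p$ lifts to a genuine dg (or $\AI$) functor on $\Tw(\cC)$ so that $(p(K),p(f))$ is literally a twisted complex and not merely quasi-isomorphic to one. For an admissible dg subcategory this is standard (via, for example, a Bousfield-style construction or Drinfeld's model), but it is the one non-formal input that the argument actually requires.
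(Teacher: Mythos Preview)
Your proposal is correct and follows essentially the same route as the paper: identify $\Phi_{\cD^\perp}$ with $\Phi_{\cC/\cD}$ via the equivalence $l_{\cD^\perp}$, invoke Theorem~\ref{thm:entropy-loc}, and separately establish $h_t(\Phi_\cD)\leq h_t(\Phi_\cC)$ by pushing a near-optimal twisted complex through the right adjoint $p$. The paper's only simplification over your argument is its choice of the complementary split-generator $E$ to lie in $\cD^\perp$ (so that $p(E)=0$), which eliminates the need for your replacement step involving $\rho=\delta_t(D;p(G))$; your concern about $p$ being a genuine dg functor is the same non-formal input the paper also uses implicitly.
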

	\begin{proof}
		Let us consider the case that $\mathcal{D}$ is right-admissible only since the proof for the other case is similar.
		
		First we show that, for any $t\in \R$,
		\begin{equation}\label{inequality rightadmissible}
			h_t(\Phi_{\mathcal{C}}) \geq h_t ( \Phi_{\mathcal{D}}).
		\end{equation}
		
		For that purpose, take a split-generator $D$ for $\mathcal{D}$ and a split-generator $E$ for $\mathcal{D}^{\perp}$. Then
		$C := D \oplus E$ is a split-generator for $\mathcal{C}$.
		
		It is easy to see that for any positive integer $m$, we have
		\begin{equation}\label{inequality rightadmissible2}
			\delta_t \left( C; \Phi_{\mathcal{C}}^m(C) \right) \geq \delta_t \left( p(C); (p \circ \Phi_{\mathcal{C}}^m)(C) \right). 
		\end{equation}
		
		But, for the right hand side of \eqref{inequality rightadmissible2}, we have
		\[ p(C) \simeq D, \]
		and
		\begin{align*}
			(p \circ \Phi_{\mathcal{C}}^m) (C) &= p \left( (\Phi_{\mathcal{D}}^m) (D) \oplus (\Phi_{\mathcal{C}}^m(E)\right)\\
			&= \Phi_{\mathcal{D}}^m (D) \oplus (p \circ \Phi_{\mathcal{C}}^m)(E).
		\end{align*}
		
		Hence, we have, for any $t\in \R$,
		\begin{equation}\label{inequality rightadmissible3}
			\begin{split}
				\delta_t \left( p(C); (p \circ \Phi_{\mathcal{C}}^m)(C) \right) &= \delta_t \left( D; \Phi_{\mathcal{D}}^m (D) \oplus (p \circ \Phi_{\mathcal{C}}^m) (E)\right)\\
				&\geq \delta_t \left(D;\Phi_{\mathcal{D}}^m (D)\right).
			\end{split}
		\end{equation}
		
		The inequalities \eqref{inequality rightadmissible2} and \eqref{inequality rightadmissible3} prove the assertion \eqref{inequality rightadmissible}.

		On the other hand, we have 
		\begin{gather*}\label{equality localization}
			l(L) \simeq l\left( \Cone(L''[-1] \to L')\right) \simeq l(L''[-1]),\\
			(l \circ j \circ q)(L) \simeq l(L''[-1]),
		\end{gather*}
		where $L, L'$ and $L''$ are the same as in Definition \ref{dfn:orthogonalcomplement}.
		Thus, $(l \circ j \circ q)(L) \simeq l(L)$.
		
		Now we show that 
		\begin{equation}\label{equality rightorthogonal}
			h_t (\Phi_{\mathcal{D}^{\perp}}) = h_t (\Phi_{\mathcal{C}/\mathcal{D} }), \forall t\in \R.
		\end{equation} 
		For a proof, observe that the following diagram commutes.
		\[  \xymatrix{  \mathcal{D}^{\perp} \ar[r]^{l_{\mathcal{D}^{\perp}} } \ar[d]_{\Phi_{\mathcal{D}^{\perp}} }   &  \mathcal{C}/\mathcal{D} \ar[d]^{\Phi_{\mathcal{C}/\mathcal{D}} } \\     \mathcal{D}^{\perp} \ar[r]_{l_{\mathcal{D}^{\perp}}} & \mathcal{C}/\mathcal{D}.  }     \]
		
		Indeed, we have, for any $K\in \mathcal{D}^{\perp}$,
		\begin{align*}
			(\Phi_{\mathcal{C}/\mathcal{D}} \circ l_{\mathcal{D}^{\perp}}) (K) &= (\Phi_{\mathcal{C}/\mathcal{D}} \circ l \circ j)(K)  \\
			&\simeq (l\circ \Phi_{\mathcal{C}}\circ j) (K) \\
			&\simeq (l \circ j\circ q \circ \Phi_{\mathcal{C}} \circ j) (K)\\
			&= (l \circ j \circ \Phi_{\mathcal{D}^{\perp}}) (K) \\
			&=  (l_{\mathcal{D}^{\perp}} \circ \Phi_{\mathcal{D}^{\perp}}) (K).
		\end{align*}
		Here we used \eqref{equality localization} for the third equality.
		
		Hence, two functors $\Phi_{\mathcal{D}^{\perp}} :\mathcal{D}^{\perp} \to \mathcal{D}^{\perp}$ and $\Phi_{\mathcal{C}/\mathcal{D}} :  \mathcal{C}/\mathcal{D} \to \mathcal{C}/\mathcal{D}$ are identified on object level via the equivalence 
		\[l_{\mathcal{D}^{\perp}} : \mathcal{D}^{\perp} \to \mathcal{C}/\mathcal{D}.\] 
		Since the categorical entropy depends on how a functor acts on a generator iteratively,
		this proves the assertion.

		Now, Theorem \ref{thm:entropy-loc} and \eqref{equality rightorthogonal} say that, for any $t\in \R$, we have
		\begin{equation}\label{inequality rightorthogonal}
			h_t ( \Phi_{\mathcal{D}^{\perp}}) \leq h_t(\Phi_{\mathcal{C}}) \leq  \mathrm{max} \{ h_t(\Phi_{\mathcal{D}}),h_t(\Phi_{\mathcal{D}^{\perp}}) \}.
		\end{equation}

		The desired equality \eqref{equality orthogonal} follows from \eqref{inequality rightadmissible} and \eqref{inequality rightorthogonal}.
	\end{proof}

	\begin{rmk}
		Corollary \ref{cor:orthogonal} is also proved in a recent work \cite[Proposition 2.8]{Kim22}. 
		We note that Theorem \ref{thm:entropy-loc} can be applied to any full subcategory, but Corollary \ref{cor:orthogonal} cannot.
	\end{rmk}

	\subsection{Application: Categorical entropy on Fukaya categories}
	\label{section Entropy for wrapped Fukaya categories via compactly supported symplectomorphisms}
	
	We discuss one direct application of the previous results on Fukaya categories.
	Let $W$ be a Weinstein manifold of finite type and $\Lambda$ be a Legendrian stop in its ideal boundary $\partial_{\infty}W$.
	Write 
	\begin{itemize}
		\item $\cW(W)$ for the pretriangulated closure of its wrapped Fukaya category of $W$, which is an $A_{\infty}$-category generated by the Lagrangian cocores $G_1,\ldots,G_m$, 
		\item $\cW(W,\Lambda)$ for the pretriangulated closure of the partially wrapped Fukaya category of $(W,\Lambda)$.
	\end{itemize} 
	We assume $2 c_1(W)=0$ so that relevant categories are $\Z$-graded with respect to the choice of a complex volume form. 
	If $2 c_1(W)\neq 0$, the categories will be $\Z/2$-graded, in which case, we can only talk about the categorical entropy when $t=0$. 
	See Section \ref{subsubsection fukaya category} for a brief introduction to wrapped Fukaya categories of Weinstein manifolds. Define
	\[G:=G_1\oplus\ldots\oplus G_m .\]
	Let $D_1,\ldots,D_r$ be linking disks corresponding to $\Lambda$, and let $\cD$ be the full pretriangulated $A_{\infty}$-subcategory of $\cW(W,\Lambda)$ generated by $D_1,\ldots,D_r$. Define
	\[D:=D_1\oplus\ldots\oplus D_r\quad\text{and}\quad\overline G:=G\oplus D .\]
	By \cite{Ganatra-Pardon-Shende20}, we can write
	\[\cW(W)\simeq\cW(W,\Lambda)/\cD .\]
	Then there is the localisation functor
	\[l\colon\cW(W,\Lambda)\to\cW(W) .\]
	Note that $\overline G$ split-generates $\cW(W,\Lambda)$, $G$ split-generates $\cW(W)$, and $D$ split-generates $\cD$.
	
	Let $\phi\colon W\to W$ be a compactly supported exact symplectic automorphism, suitably graded so that it acts as an identity on $D_i$'s. See Subsection \ref{subsec: grading of symplectic automorphism} for a detailed discussion. Then there are induced functors
	\[\Phi_{\cW(W,\Lambda)}\colon\cW(W,\Lambda)\to\cW(W,\Lambda),\quad\text{and}\quad\Phi_{\cW(W)}\colon\cW(W)\to\cW(W) ,\]
	satisfying
	\[\Phi_{\cW(W)}\circ l\simeq l\circ\Phi_{\cW(W,\Lambda)} .\]
	
	\begin{prop}\label{prp:compactly supported}
		Let $\phi\colon W\to W$ be a compactly supported exact symplectic automorphism. 
		Then, the induced functor $\Phi_{\cD}\colon \cD\to\cD$ satisfies
		\[h_t(\Phi_{\cD}) \leq 0 .\]
	\end{prop}
	\begin{proof}
		Since $\phi(D_i)=D_i$ for any $i$, $\Phi_\cD$ is the identity functor. The result follows.
	\end{proof}
	
	\begin{thm}
		\label{thm:entropy-w-pw}
		Let $\phi\colon W\to W$ be a compactly supported exact symplectic automorphism, and $\Lambda\subset\partial_{\infty}W$ is a stop. 
		\begin{enumerate}
			\item If we have $h_t(\Phi_{\cW(W)})\geq 0$ for a given $t\in\R$, then
			\begin{equation*}
				h_t(\Phi_{\cW(W)})=h_t({\Phi_{\cW(W,\Lambda)}}).
			\end{equation*}
			\item If, moreover, we have $h_{-t}(\Phi_{\cW(W)}^{-1}) \geq 0$, then
			\begin{equation*}
				h_t(\Phi_{\cW(W)})=h_t({\Phi_{\cW(W,\Lambda)}})=h_{-t}({\Phi^{-1}_{\cW(W,\Lambda)}})=h_{-t}(\Phi^{-1}_{\cW(W)}).
			\end{equation*}
			\item In particular, the above assumptions hold for $t=0$ and we have
			\begin{equation*}
				h_0(\Phi_{\cW(W)})=h_0({\Phi_{\cW(W,\Lambda)}})=h_0({\Phi^{-1}_{\cW(W,\Lambda)}})=h_0(\Phi^{-1}_{\cW(W)}).
			\end{equation*}
		\end{enumerate}
	\end{thm}
	\begin{proof}
		The first item (1) is a direct corollary of Theorem \ref{thm:entropy-loc}.
		Take $\cC:= \cW(W,\Lambda)$, and let $\cD$ be the category defined as above. Then, Proposition \ref{prp:compactly supported} and Theorem \ref{thm:entropy-loc} complete the proof of (1).
		
		If $h_t(\Phi_{\cW(W)})\geq 0$ and if $h_{-t}(\Phi_{\cW(W)}^{-1}) \geq 0$, we have $h_t(\Phi_{\cW(W)})=h_t({\Phi_{\cW(W,\Lambda)}})$ and $h_{-t}({\Phi^{-1}_{\cW(W,\Lambda)}})=h_{-t}(\Phi^{-1}_{\cW(W)})$ because of (1).
		Thus, in order to prove (2), it is enough to prove $h_t({\Phi_{\cW(W)}})=h_{-t}({\Phi^{-1}_{\cW(W)}})$.
		
		For that, we fix a Lefschetz fibration $\pi: W \to \mathbb{C}$.
		We note that the existence of $\pi$ is proven in \cite{Giroux-Pardon17}.
		And, we set $\Lambda_0 := \pi^{-1}(-\infty)$. 
		Then, as mentioned in \cite{Ganatra-Pardon-Shende18b}, $\cW(W,\Lambda_0)$ is a smooth and proper category.
		Thus, we have 
		\[h_t(\Phi_{\cW(W)})=h_t({\Phi_{\cW(W,\Lambda_0)}})=h_{-t}({\Phi^{-1}_{\cW(W,\Lambda_0)}})=h_{-t}(\Phi^{-1}_{\cW(W)}),\]
		by Proposition \ref{prp:op and inv}.
		
		Assumptions of (1) and (2) holds for $t=0$ because $\Phi_{\cW(W)}$ is a quasi-equivalence. Hence, (3) holds.
	\end{proof}
		
	\section{Entropies of exact symplectic automorphisms}
	\label{section entropies of products of Dehn twists}
	The results in Section \ref{section Entropy for wrapped Fukaya categories via compactly supported symplectomorphisms} compares categorical entropy on wrapped and partially wrapped Fukaya categories, and one of the main goals of Section \ref{section entropies of products of Dehn twists} is to compare those on compact and wrapped Fukaya categories. 
	To do that, as mentioned in Section \ref{section introduction}, we restrict our attention to symplectic manifolds satisfying a duality condition (between compact and wrapped Fukaya categories).
	Moreover, the comparison gives us a way to compute categorical entropy as an exponential growth of morphisms spaces. 
		
	Section \ref{section entropies of products of Dehn twists} is organized as follows:
	Section \ref{subsection setup} briefly reviews preliminaries.
	Also, we fix our Weinstein manifolds in Section \ref{subsection setup}, which are plumbing spaces of $T^*S^n$.  
	In Section \ref{subsection entropies of products of Dehn twists}, we prove Theorem \ref{thm:entropy}, i.e., the categorical entropy of any compactly supported symplectic automorphism can be computed in terms of Lagrangian Floer (co)homology.
	And this leads us to the proof of Theorem \ref{thm:equality}, studying relationships between categorical entropies on the compact and wrapped Fukaya categories.
	In Section \ref{subsection equality between categorical entropies}, we state Theorem \ref{thm general} combining all results proven in Section \ref{section entropies of products of Dehn twists}. 
	
	We note that a well-known example of Weinstein manifolds satisfying our duality condition is the plumbings of $T^*S^n$ along tree. 
	For convenience, we work on the plumbing spaces, but every argument in Section \ref{section entropies of products of Dehn twists} can be applied to any Weinstein manifold with the duality condition, without a modification. 
	\subsection{Setup}
	\label{subsection setup}
	
	\subsubsection{Preliminaries on compact and wrapped Fukaya categories}
	\label{subsubsection fukaya category}
		We briefly introduce the notion of compact Fukaya category and wrapped Fukaya category of a Weinstein manifold. See \cite{Seidel08,Ganatra-Pardon-Shende20} for more details.
		Let $(W,d\lambda)$ be a Weinstein manifold of finite type such that $2c_1(W)=0$. Let us fix a quadratic complex volume form $\eta^2$ on $W$. Then the Lagrangian Grassmannian bundle $\mathrm{Gr}(TW) \to W$, whose fiber at $p \in W$ is the Lagrangian Grassmannian $\mathrm{Gr}(T_p W)$, admits a squared phase map $\alpha : \Gr(TW)\to S^1.$

		Every Lagrangian submanifold $L$ of $W$ has a natural section $\beta_L : L \to \Gr(TW)$ mapping $p\in L$ to $T_p L \in \Gr(TW)$. We say that a pair $(L,\widetilde{\gamma}_L)$(or $L$ itself) is a graded Lagrangian if $L$ is a Lagrangian and $\widetilde{\gamma}_L : L \to \R$ is a lifting of $\gamma_L := \alpha \circ \beta_L: L \to S^1 = \R/\Z$.


		To deal with pin structures, we fix $b \in H^2(W,\Z/2)$ called a {\em background class}. We say that $L$ is relatively pin with respect to $b$ if the second Stiefel-Whitney class $w_2(L)$ coincides with $b|_L$. We will specify a particular choice of $b$ whenever it is necessary.

		\begin{dfn}
			\label{dfn fukaya categories}
			\mbox{}
			\begin{enumerate}
				\item Let {\em $\W(W)$} be the wrapped Fukaya category of $W$ whose objects are exact Lagrangian submanifolds $L$ of $W$ with a grading and a relatively pin structure such that $L$ is tangent to the Liouville vector field outside a compact subset of $W$,
				Let us call such a Lagrangian submanifold $L$ {\em admissible}.
				\item Let $\F(W)$ be the full subcategory of $\W(W)$ consisting of closed Lagrangians.
			\end{enumerate}
			By abuse of notation, let us denote $\Tw \W(W)$ and $\Tw \F(W)$ by $\W(W)$ and $\F(W)$, respectively. 			\end{dfn}

	\subsubsection{Plumbing spaces $P_n(T)$}
	Now, we will fix Weinstein manifolds that we will work with in the later parts of the paper. 
	
	Let $T$ be a tree and let $V(T)$ denote the set of vertices of $T$. 
	First, we introduce the following:
	\begin{dfn}
		\label{def P_n(T)}
		For a tree $T$, let {\em $P_n(T)$} denote the Weinstein manifold obtained by plumbing multiple copies of $T^*S^n$ along a tree $T$. See, e.g.,  \cite{Etgu-Lekili17} for a more detailed definition. 
	\end{dfn}
	One can set the following notation:
	\begin{dfn}
		\label{def spheres, dehn twists, etc}
		\mbox{}
		\begin{enumerate}
			\item By definition, for any vertex $v \in V(T)$, there is a Lagrangian sphere corresponding to $v$. 
			Let {\em $S_v$} denote the Lagrangian sphere corresponding to $v$.
			\item Let {\em $L_v$} denote a Lagrangian cocore of $S_v$. 
			In particular, $L_v$ is a Lagrangian disk such that 
			\begin{gather*}
				|L_v \cap S_w| = 
				\begin{cases}
					1, \text{  if  } v = w, \\
					0, \text{  otherwise.}
				\end{cases}		
			\end{gather*} 
		\end{enumerate}
	\end{dfn}
	
	\begin{rmk}
		\label{rmk cocore}
		\mbox{}
		\begin{enumerate}
			\item[(i)] To be more precise, the choice of "{\em Lagrangian cocore}" in Definition \ref{def spheres, dehn twists, etc} is not unique. 
			In fact, when $W$ is a Weinstein manifold, for any smooth point $p$ of a Lagrangian skeleton of $W$, there is a Lagrangian disk which transversally intersects the Lagrangian skeleton at $p$. 
			This is because, after a proper modification of the Liouville structure, the new Liouville structure near the smooth point $p$ is exactly the same {as} the standard Liouville structure of a cotangent bundle. 
			We note that the zero section of the cotangent bundle is a small neighborhood of $p$ in the Lagrangian skeleton. 
			For more details, we refer the reader to \cite[Section 9.1]{Chantraine-Rizell-Ghiggini-Golovko}. Also, see \cite[Sections 1.1 and 7.1]{Ganatra-Pardon-Shende18}.
			\item[(ii)] Also, for a tree $T$, $P_n(T)$ admits a Weinstein structure whose Lagrangian skeleton is the union of the zero sections, i.e., 
			\[\cup_{v \in V(T)}S_v.\]
			Thus, based on (i), for any point $p \in S_v$ except the plumbing points, there is a Lagrangian cocore $L_p$.
			Moreover, if $p_1, p_2 \in S_v$, then it is easy to check that $L_{p_1}$ and $L_{p_2}$ are Hamiltonian isotopic. 
			Thus, the Lagrangian cocore $L_v$ in Definition \ref{def spheres, dehn twists, etc} is well defined up to Hamiltonian isotopy.
		\end{enumerate}
	\end{rmk}

		For a given tree $T$, since $c_1(P_n(T))=0$ (see \cite{Etgu-Lekili17} for $n=2$), $P_n(T)$ has quadratic complex volume forms. Furthermore, if $n\geq 2$, then since $H^1(P_n(T),\Z) =0$, any two quadratic complex volume forms define equivalent gradings on Fukaya categories. If $n\geq3$, then $H^2(P_n(T), \mathbb Z/2)=0$ and the background class $b$ is zero. From now on, we will work with $n\geq 3$. Then, for every $v\in V(T)$, both $S_v$ and $L_v$ have a unique pin structure and they are gradable. Their gradings will be explained in Lemma \ref{lemma grading}.



	As mentioned before, it was shown in \cite{Ekholm-Lekili} that $\W (P_n(T))$ and $\F (P_n(T))$ are Koszul dual to each other for $n \geq 3$.
	Moreover, \cite[Section 4.2]{Abouzaid-Smith} shows the following lemma.
	\begin{lem}
		\label{lemma grading}
		If two vertices $v, w \in V(T)$ are connected by an edge in $T$, we write $v \sim w$. 
		Then, there are gradings for $S_v$ and $L_v$ for all $v\in V(T)$ and Floer data for $\W\left(P_n(T)\right)$ so that the following hold: 
		\begin{enumerate}
			\item For each $v \in V(T)$, 
			\[\hom^*(L_v, S_v) = \begin{cases} k \langle \overline{p}_v \rangle \simeq k& \text{if  }*=0, \\ 0 &\text{otherwise},\end{cases}\]
			or equivalently, 
			\[\hom^*(S_v, L_v) = \begin{cases} k \langle p_v \rangle \simeq k & \text{if  }*=n, \\ 0 &\text{otherwise},\end{cases}\]
			where $\{p_v\} = S_v \cap L_v$ and $\overline{p}_v$ denotes the same intersection point regarded as a morphism from $L_v$ to $S_v$.
			\item For any $v, w \in V(T)$ such that $v \neq w$, 
			\[\hom^*(S_v,L_w) = \hom^*(L_v, S_w) =0.\]
			\item For any $v \in V(T)$, 
			\[\hom^*(S_v,S_v) = \begin{cases} k & \text{if  } * = 0, n, \\ 0 &\text{otherwise}.\end{cases}\]
			\item For any $v, w \in V(T)$ such that $v \sim w$, there exists an integer $s_{vw} \in \{1, \dots, n-1\}$ such that
			\[\hom^{*} (S_v,S_w)  =\begin{cases} k & *= s_{vw},\\ 0 & \text{otherwise},\end{cases} \]
			and 
			\[s_{vw} + s_{wv} = n.\]
			\item For any $v, w \in V(T)$ such that $v \neq w, v \nsim w$, 
			\[\hom^{*} (S_v, S_w) = 0.\]
			\item For any $v,w\in V(T)$, $\Hom^*(S_v,S_w)$ is non-negatively graded and $\dim \Hom^0(S_v,S_v) =1$.
			\item For any $v,w \in V(T)$, $\Hom^*(L_v,L_w)$ is non-positively graded and $\dim \Hom^0(L_v,L_v)=1$.
		
		\end{enumerate}
	\end{lem} 
	In the rest of our paper, our morphism spaces of the wrapped Fukaya categories of plumbing spaces will be as in Lemma \ref{lemma grading}.
	
	\begin{rmk}
		We recall the reason why we care those specific spaces given in Section \ref{subsection setup}. If $W$ is a plumbing space of $T^*S^n$ with $n \geq 3$ along a tree, then there are many known facts about relations between the wrapped and compact Fukaya category of $W$, for example, the ``{\em Koszul duality}'' mentioned in \cite{Ekholm-Lekili}.
		Thus, the plumbing spaces can be good starting points to study the connection between entropies on different Fukaya categories of the same Weinstein manifold.
	\end{rmk}

	\subsubsection{Symplectic automorphism}
	\label{subsec: grading of symplectic automorphism}
	In this subsection, we discuss auto-equivalences on the compact and wrapped Fukaya categories induced by a symplectic automorphism. 

	Let $(W,d\lambda)$ be a Weinstein manifold of finite type such that $2c_1(W)=0$. Let a quadratic complex volume form $\eta^2$ on $W$, and let $b\in H^2(W,\Z/2)$ be given so that the wrapped Fukaya category $\W(W)$ and the compact Fukaya category $\F(W)$ make sense.
		
	For an exact symplectic automorphism $\phi$ on $W$ to induce auto-equivalences on $\W(W)$ and $\F(W)$,  $\phi$ is required to satisfy the following three conditions:
		\begin{enumerate}
			\item $\phi^*\lambda -\lambda =dh$ for some compactly supported function $h:W \to \R$.
			\item $\phi^* \eta^2 = f \eta^2$ for some non-vanishing complex-valued function $f: W \to \C^*$ which admits a lifting $\widetilde{f} = (\widetilde{f}_1,\widetilde{f}_2): W \to \R \times \R$ where $\R \times \R \to \C^*, (s,t) \mapsto e^{2\pi(s+it)}$ is regarded as a universal covering of $\C^*$.
			\item $ \phi^* b= b$.
		\end{enumerate}

	In particular, we use the following terminology used in \cite{Seidel06} for exact symplectic automorphisms satisfying the condition (1) above.
			\begin{dfn}\label{dfn liouville isomorphism}
	A Liouville automorphism of $(W,d\lambda)$ is an exact symplectic automorphism $\phi$ on $(W,d\lambda)$ satisfying the first condition (1) above.
	\end{dfn}
	Note that any compactly supported exact symplectic automorphism of $W$ is a Liouville automorphism of $W$.
	
	Now let $L$ be a Lagrangian submanifold of $W$ tangent to the Liouville vector field outside a compact subset of $W$. This is equivalent to saying that $\lambda|_{L}$ vanishes outside the compact subset. Therefore, the first condition (1) means that $\phi(L)$ is tangent to the Liouville vector field outside a compact subset of $W$ as well.
	
	The second condition (2) implies that $\phi$ defines a graded automorphism $(\phi,\gamma^\#)$ in the sense of \cite[Chapter (12i)]{Seidel08}. Indeed, $\gamma^\# :\Gr(TW) \to \R$ can be given by $\widetilde{f}_2 \circ \mathcal{P}$
	where $\mathcal{P} : \Gr(TW) \to W$ is the projection.
	This allows us to send a graded Lagrangian $(L,\widetilde{\gamma}_L)$ to another graded Lagrangian $(\phi(L), \widetilde{\gamma}_L \circ \phi^{-1}+ \gamma^\#\circ \beta_L \circ \phi^{-1})$.

	Furthermore, the third condition (3) ensures that for a Lagrangian $L$ relatively pin with respect to $b$, $\phi(L)$ is again relatively pin with respect to $b$ since $b|_{\phi(L)} = (\phi^{-1})^* b|_{\phi(L)} = (\phi^{-1})^* (b|_{L})= (\phi^{-1})^*w_2(L) = w_2(\phi(L))$. Furthermore, the pull-back $(\phi^{-1})^*$ sends a relative pin structure on $L$ to one on $\phi(L)$.

	In summary, if the conditions (1), (2) and (3) are satisfied, then $\phi$ induces well-defined auto-equivalences on both $\W(W)$ and $\F(W)$ by sending an admissible Lagrangian $L$ to another admissible Lagrangian $\phi(L)$ with a grading and a relative pin structure described above.
	Passing these to the pretriangulated closure, we define the following notations.
	\begin{dfn}
		Let $\Phi_{\F(W)}$ and $\Phi_{\W(W)}$ be the auto-equivalence induced by $\phi$ on $\F(W)$ and $\W(W)$, respectively.
	\end{dfn}

	Throughout the paper, for any Weinstein domain having exactly one boundary component(, which holds when its dimension is greater than or equal to $4$), and any compactly supported Liouville automorphism $\phi$ on its completion $W$, we will choose a graded automorphism $(\phi,\gamma^\#)$ in a specific way once the second condition (2) is satisfied. Indeed, we consider the graded automorphism $(\phi,\gamma^\#)$ such that $\gamma^\# = 0$ on $\mathcal{P}^{-1} (W \setminus \mathrm{Supp } (\phi) ) \subset \Gr(TW)$, which makes sense since $\mathrm{Supp } (\phi)$ is assumed to be compact and $\phi$ is the identity on the complement $W \setminus \mathrm{Supp } (\phi) $.
		
	Note that the second condition (2) holds for any $\phi$ if $H^1(W,\Z) =0$ and the third condition (3) holds for any $\phi$ if $b$ is chosen to be zero. In particular, in the case of plumbings, observe that $P_n(T)$ satisfies $H^1(P_n(T),\Z) =0$ for any $n\geq 2$ and any tree $T$. Furthermore, since $H^2(P_n(T), \Z/2) =0$ for any $n\geq 3$ and any tree $T$ as mentioned above, the background class $b$ is necessarily $0$ for the case $n\geq 3$. Therefore, any Liouville automorphism of $P_n(T)$ defines equivalences on Fukaya categories of $P_n(T)$.

	\subsection{Entropies of Liouville automorphisms}
	\label{subsection entropies of products of Dehn twists}
	Let $n\geq 3$. We note that the generation results for both $\F(P_n(T))$ and $\W(P_n(T))$ are well-known.
	For $\F(P_n(T))$, see \cite[Theorem 1.1]{Abouzaid-Smith}. 
	More precisely, it was shown that $\mathcal{F}(P_n(T))$ is generated by the spheres $\{S_v | v \in V(T)\}$.
	Hence we can take $S = \bigoplus_{v \in V(T)} S_v$ as a split-generator of $\mathcal{F}(P_n(T))$. 
	On the other hand, $\W(P_n(T))$ is generated by the cocores $\{L_{v}|  v\in V(T)\}$, and so we can take $L = \bigoplus_{v\in V(T)} L_v$ as a split-generator of $\W(P_n(T))$.
	
	As seen above, the endomorphism algebra $\hom^*(S,S) = \bigoplus_{v,w} \hom^*(S_v,S_w)$ is non-negatively graded and furthermore $\hom^0(S_v,S_v) = k \langle e_{S_v} \rangle$ for a cohomological unit $e_{S_v}$, which we will call the identity morphism of $S_v$. On the other hand, the endomorphism algebra $\Hom^* (L,L) = \bigoplus_{v,w}\Hom^* ( L_v, L_w)$ is known to be non-positively graded for certain gradings of $L_v$, and $\Hom^0(L_v,L_v)$ is generated by the strict unit. By using a minimal model for $\hom^*(L,L)$ (See \cite[Section (1i)]{Seidel08}), we may assume that $\hom^*(L_v,L_v)$ is also non-positively graded. This means that every morphism of $\hom^0(L_v,L_v)$ is a cocycle and is a multiple of a cohomological unit $e_{L_v} \in \hom^0(L_v,L_v)$, which we will call the identity morphism of $L_v$. 
	Note that $\{L_v|v\in V(T)\}$ still generates $\W(P_n(T))$ even after $\bigoplus_{v,w} \hom^*(L_i,L_j)$ is perturbed into a minimal model.

		Before stating Theorem \ref{thm:simpletwistedcomplex}, observe that Definition \ref{dfn:length}, Equation \eqref{eq:hom-space-tw}, and Lemma \ref{lemma grading} imply the following:
		\begin{lem}\label{lem:length1}
			\mbox{}
			\begin{enumerate}
				\item Let $\mathcal{S} \in \Tw \{S_v| v \in V(T)\}$. For any $v\in V(T)$ and $t\in \R$, the following equality holds:
				$$\len_{t,S_v} \mathcal{S} = \sum_{k \in \Z} \dim \hom^{n+k} \left(\mathcal{S}, L_v\right) e^{kt}.$$
				\item Let $\mathcal{L} \in \Tw \{L_v| v \in V(T)\}$. For any $v\in V(T)$ and $t\in \R$, the following equality holds:
				$$\len_{t,L_v} \mathcal{L} = \sum_{k \in \Z} \dim \hom^{n-k} \left(S_v,\mathcal{L}\right)e^{kt}.$$
			\end{enumerate}
		\end{lem}
	
	\begin{thm}\label{thm:simpletwistedcomplex}
		\mbox{}
		\begin{enumerate}
			\item (\cite[Lemma 2.5]{Abouzaid-Smith})
			For any admissible closed Lagrangian $E$ of $P_n(T)$, there is a twisted complex $\mathcal{E} \in \Tw \{S_v| v\in V(T)\}$ for $E$, in which none of the arrows are multiples of the identity morphisms.
			\item
			{For any admissible Lagrangian $E$ of $P_n(T)$, there is a twisted complex $\mathcal{E} \in \Tw \{L_v| v\in V(T)\}$ for $E$, in which none of the arrows are multiples of the identity morphisms.}
		\end{enumerate}
	\end{thm}
	\begin{proof}
	We refer to \cite[Lemma 2.5]{Abouzaid-Smith} for a proof of the first statement (1). 
	We remark that the statement of \cite[Lemma 2.5]{Abouzaid-Smith} is for plumbings of two cotangent bundles, but its proof works for arbitrary plumbings over a tree. See also \cite[Remark 1.2, Leamma 4.7]{Abouzaid-Smith}.
				
			Let us now prove the second statement (2). As mentioned above, any Lagrangian $E$ is quasi-isomorphic to a twisted complex $\mathcal{F} \in \Tw \{L_v | v\in V(T) \}$. It suffices to show that any twisted complex ${\mathcal{F}} \in \Tw \{ L_v | v \in V(T)\}$ is quasi-isomorphic to a twisted complex $\mathcal{E} \in \Tw \{ L_v | v \in V(T)\}$, in which none of the arrows are nonzero multiples the identity morphisms.
			
			Let ${\mathcal{F}} = \left[ (L_{v_i}[d_i])_{i=1}^{k}, (f_{ij}) \right]$ be given for some $k\in \mathbb{Z}_{\geq 1}$, $v_i \in V(T)$,  $d_i \in \mathbb{Z}$ and $f_{ij} \in \hom^1(L_{v_j}[d_j],L_{v_i}[d_i])$ for $i>j$.
			Since $\hom^*(L_{v_j},L_{v_i})$ is non-positively graded, if $f_{ij}$ is nonzero then, we have 
			\begin{equation}
				\label{eq:degree}
				d_i \leq d_j -1.
			\end{equation}
			Hence we may assume that the components $L_{v_1}[d_1],\dots, L_{v_k}[d_k]$ of $\mathcal{F}$ are ordered in such a way that
			\begin{equation}
				\label{eq:order}
				d_i \leq d_j, \forall 1\leq j < i \leq k.
			\end{equation}
			
			We will show the assertion by an induction on the length $k$ of $\mathcal{F}$.
			
			
			Consider the subset $J$ of $\{ 1,\dots, k\}$ defined by
			\[ J= \{ 1\leq j \leq k | f_{ij} \text{ is a nonzero multiple of the identity morphism for some }i>j\}. \]
			
			If $J$ is empty, then $\mathcal{F}$ is the desired twisted complex.

			Now assume that $J$ is non-empty. We will find another twisted complex for $\mathcal{F}$ whose length is strictly less than that of $\mathcal{F}$. For that purpose, let $j_0 = \mathrm{max} J$ and consider the twisted complex $\mathcal{F}_0$ given by
			\[ \mathcal{F}_0 = \left[ (L_{v_i}[d_i])_{i=j_{0}}^{k}, (f_{ij}) \right]\]
			and the twisted complex $\mathcal{F}_1$ given by
			\[ \mathcal{F}_1 = \left[(L_{v_i}[d_i])_{i=1}^{j_{0}-1}, (f_{ij}) \right]\]
			so that we can express $\cF$, using the notation in Remark \ref{rmk iterated cone}\eqref{item:notation}, as the twisted complex
			\[\cF=\left[\cF_1\to\cF_0 \right] .\]
			Hence, it suffices to find a twisted complex $\mathcal{E}_0$ for $\mathcal{F}_0$ whose length is strictly less than that of $\mathcal{F}_0$ and replace $\mathcal{F}_0$ by $\mathcal{E}_0$ in the above twisted complex.
			
			For that purpose, let $i_0 >j_0$ be an integer such that $f_{i_0,j_0}$ is a nonzero multiple of the identity morphism, which means that $d_{i_0} = d_{j_0} -1$. But, due to \eqref{eq:degree}, this means that there is no nonzero chain of arrows connecting $L_{v_{j_0}}[d_{j_0}]$ to $L_{v_{i_0}}[d_{i_0}]$ other than the above arrow $f_{i_0,j_0}$. Furthermore, the maximality of $j_0$ in $J$ and \eqref{eq:order} imply that there is no nonzero arrow from $L_{v_j} [d_j]$ to both $L_{v_{j_0}}[d_{j_0}]$ and $L_{v_{i_0}}[d_{i_0}]$ for any $j > j_0$. Therefore, $\mathcal{F}_0$ can be written as
			\[\cF_0=[\mathcal{E}_1\to\mathcal{E}_0]\]
			where
			\[ \mathcal{E}_0 = \left[ (L_{v_i}[d_i])_{j_0 < i \leq k, i_0 \neq i \neq j_0}  , (f_{ij}) \right],\]
			and
			\[\mathcal{E}_1=\left[L_{v_{j_0}}[d_{j_0}]\xrightarrow{f_{i_0,j_0}} L_{v_{i_0}}[d_{i_0}]\right].\]
			
			But the twisted complex $\mathcal{E}_1$ is quasi-isomorphic to zero as $f_{i_0,j_0}$ is a nonzero multiple of the identity morphism. Hence $\mathcal{F}_0$ is quasi-isomorphic to $\mathcal{E}_0$, whose length is strictly less than that of $\mathcal{F}_0$ as desired.
			
			This induction process eventually stops since the number of components of a twisted complex is finite.
	\end{proof}
	
	\begin{rmk}
		The second statement (2) of Theorem \ref{thm:simpletwistedcomplex} can be generalized to any cohomological unital pretriangulated $A_{\infty}$ (or dg) category $\cC$ generated by $G_1,\dots, G_m$ satisfying
		\begin{itemize}
			\item $\Hom^*_{\cC}(G_i,G_j)$ is non-positively graded and
			\item $\dim \Hom^0_{\cC} (G_i,G_i) = 1$ for all $i=1,\dots,m$
		\end{itemize}
		in the sense that, any object of $\cC$ is quasi-isomorphic to a twisted complex built from $G_1,\dots,G_m$, in which none of arrows are multiples of the identity morphisms. Here, by an identity morphism, we mean a strict unit for $G_i$ for some $i=1,\dots, k$ after replacing $\bigoplus_{i,j =1}^k \hom_{\cC} (G_i,G_j)$ with a minimal model.
	\end{rmk}

	We remark that the twisted complex mentioned in Theorem \ref{thm:simpletwistedcomplex}, (1) is not only an object of $\F(P_n(T))$, but also an object of $\W(P_n(T))$.
	In the statement/proof of Theorem \ref{thm:simpletwistedcomplex}, we consider the twisted complexes as an object in $\W(P_n(T))$ and the hom-spaces below are morphism spaces of $\W(P_n(T))$.
	
	For each $m \in \mathbb{N}$, let $\mathcal{S}_m \in \Tw \{S_v|v \in V(T)\}$ be a twisted-complex for $\phi^m(S)$ satisfying the property in Theorem \ref{thm:simpletwistedcomplex} (1), and let $\mathcal{L}_m \in \Tw \{L_v| v \in V(T)\}$ be a twisted-complex for $\phi^m(L)$ satisfying the property in Theorem \ref{thm:simpletwistedcomplex} (2). Then we have the following:
	
	\begin{lem}\label{lem:simpleness}
		Let $\phi$ be a Liouville automorphism of $P_n(T)$.  For any $m \in \mathbb{N}$ and $v \in V(T)$, the following hold:
		\begin{enumerate}
			\item The cochain complex $\hom^*(\mathcal{S}_m, L_v)$ has the zero differential map.
			\item The cochain complex $\hom^*(S_v, \mathcal{L}_m)$ has the zero differential map.
		\end{enumerate}
	\end{lem}
	\begin{proof}
	Since the proof for the second statement is similar to that of the first statement, we will prove the first statement only.
			
			The twisted complex $\mathcal{S}_m$ may be written as
			\[ \mathcal{S}_m = \left[(S_{v_i} [d_i])_{i=1}^{a}, (f_{i j})\right] \]
			where $(f_{ij} \in \hom^1( S_{v_j} [d_j], S_{v_i}[d_i])$, $i > j)$ are the arrows in the twisted complex $\mathcal{S}_m$.
			
			Since the components of $\mathcal{S}_m$ that contribute to $\hom^*(\mathcal{S}_m, L_v)$ are of the form $S_v[d]$, we collect all the indices $1\leq i_1 < \dots < i_{ \len_{S_v} \mathcal{S}_m} \leq a$ such that $v_{i_k} = v$. Then the morphism space $\hom^*(\mathcal{S}_m,L_v)$ is given by
			\[ \hom^*(\mathcal{S}_m,L_v) = \bigoplus_{1\leq k \leq \len_{S_v}\mathcal{S}_m} \hom^*(S_v[d_{i_k}],L_v) \simeq \bigoplus_{1\leq k \leq \len_{S_v}\mathcal{S}_m} k[-d_{i_k}-n]\]
			as a graded vector space.
			
			The differential from a summand $\hom^*(S_v[d_{i_k}],L_v) \simeq k[-d_{i_k}-n]$ to itself vanishes since it is one-dimensional. Now suppose that the differential from a summand  $\hom^*(S_v[d_{i_l}],L_v) \simeq k[-d_{i_l}-n]$ to $\hom^*(S_v[d_{i_k}],L_v)\simeq k[-d_{i_k} -n]$ does not vanish for some $1\leq k < l \leq \len_{S_v}\mathcal{S}_m$.
			First of all, since the differential increases degree by 1, it means that
			\begin{equation}\label{degree}
				d_{i_k} = d_{i_l}+1.
			\end{equation}
			On the other hand, it also implies that there is a sequence $i_k = j_0 < j_1 <\dots <j_p= i_l $ for some $p \in \Z_{\geq 1}$ such that the arrow $f_{j_{q+1} ,j_{q}} \in \hom^1(S_{v_{j_q}}[d_{j_q}], S_{v_{j_{q+1}}}[d_{j_{q+1}}])$ is nonzero for all $0\leq q <p$. But this means that
			\begin{equation*}
				d_{j_{q+1}} = d_{j_q} + \deg f_{j_{q+1},j_{q}} - 1 \geq d_{j_q}.
			\end{equation*}
			for all $0\leq q <p$, where $\deg f_{j_{q+1},j_{q}}$ is the degree of $f_{j_{q+1},j_{q}}$ as a morphism from $S_{v_{j_q}}$ to $S_{v_{j_{q+1}}}$. Indeed, $\deg f_{j_{q+1},j_{q}} \geq 1$ since $f_{j_{q+1},j_{q}}$ is not a multiple of the identity morphism by our assumption and any homogeneous element of $\hom^* (S_v,S_w)$ for $v,w \in V(T)$ has a positive degree unless it is a multiple of the identity morphism. 
			
			As a result, we have $d_{i_l} \geq d_{i_k}$, which contradicts to \eqref{degree}.
	\end{proof}

	Now we are ready to prove the following:
	\begin{thm}\label{thm:entropy}
		Let $\phi$ be a Liouville automorphism of $P_n(T)$. For any $t\in \R$, the following hold:
		\begin{gather*}
			h_t \left( \Phi_{\mathcal{F} (P_n(T))}\right) = \lim_{m \to \infty} \frac{1}{m} \log  \sum_{k \in \Z} \dim \Hom^{n+k} \left(\phi^m(S), L\right) e^{kt}, \\
			h_t \left( \Phi_{\mathcal{W} (P_n(T))}\right) = \lim_{m \to \infty} \frac{1}{m} \log  \sum_{k\in \Z} \dim \Hom^{n-k} \left(S, \phi^m(L)\right) e^{kt}.
		\end{gather*}
	\end{thm}
	\begin{proof}
		Since the proof for the second equality is similar to that of the first equality, we will prove the first one only.
		
		Recall that the categorical entropy $h_t(\Phi_{\F(P_n(T))})$ measures the exponential growth rate of
		\begin{equation}
			\label{infimum}
			\text{inf }\left\{  \sum_{v \in V(T)}  \len_{t,S_v} \widetilde{\mathcal{S}}_m \; \Bigg| \; \widetilde{\mathcal{S}}_m \simeq \phi^m(S) \oplus C \text{ for some } \widetilde{\mathcal{S}}_m \in \Tw \{S_v|v\in V(T)\}, C \in \F(P_n(T)) \right\}.
		\end{equation}

		Let $\mathcal{S}_m$ be a twisted complex for $\phi^m(S)$ given right above of Lemma \ref{lem:simpleness}.
		Then, for any twisted complex $\widetilde{\mathcal{S}}_m$ quasi-isomorphic to $\phi^m(S) \oplus C$ for some $C \in \F(P_n(T))$, we have
		\begin{equation*}
			\begin{split}
				\len_{t,S_v} \widetilde{\mathcal{S}}_m &=  \sum_{k \in \Z} \dim \hom^{n+k} \left( \widetilde{\mathcal{S}}_m, L_v\right) e^{kt} \hspace{0.5em} (\because \text{Lemma } \ref{lem:length1} ) \\
				&\geq \sum_{k \in \Z} \dim \Hom^{n+k} \left( \phi^m(S) \oplus C , L_v\right) e^{kt} \\
				&\geq \sum_{k \in \Z} \dim \Hom^{n+k}\left(\phi^m(S), L_v\right) e^{kt}\\
				&= \sum_{k \in \Z} \dim \hom^{n+k} \left(\mathcal{S}_m, L_v\right) e^{kt} \hspace{0.5em} (\because \text{Lemma } \ref{lem:simpleness})\\
				&= \len_{t,S_v} \mathcal{S}_m \hspace{0.5em} (\because \text{Lemma } \ref{lem:length1}).
			\end{split}
		\end{equation*}
		This means that $\mathcal{S}_m$ is a twisted complex quasi-isomorphic to $\phi^m(S)$ giving the infimum in \eqref{infimum}.
		
		Therefore, the categorical entropy $h_t( \Phi_{\mathcal{F} (P_n(T)}))$ is computed by the exponential growth rate of 
		\[\sum_v \len_{t,S_v} \mathcal{S}_m = \sum_v \sum_{k\in \Z} \dim \hom^{n+k}( \mathcal{S}_m, L_v)e^{kt} = \sum_v \sum_{k \in \Z} \dim \Hom^{n+k} \left(\phi^m(S), L_v\right) e^{kt} = \sum_{k \in \Z} \Hom^{n+k}\left(\phi^m(S),L\right)e^{kt} \]
		as asserted.
	\end{proof}

	\subsection{Equality between categorical entropies}
	\label{subsection equality between categorical entropies}
	Let $n\geq 3$. We prove that, for any compactly supported exact symplectic automorphism $\phi$ on $P_n(T)$, their induced functors $\Phi_{\W(P_n(T))}$ and $\Phi_{\F(P_n(T))}$ have the same categorical entropy under certain conditions.
	
	\begin{thm}\label{thm:equality}
		Let $\phi$ be a Liouville automorphism of $P_n(T)$. Then we have
		\begin{enumerate}
			\item For any $t\in \R$, the following equality hold:
			\begin{align*}
				h_{t} \left(\Phi_{\F(P_n(T))}\right) = h_{-t}\left(\Phi^{-1}_{\W(P_n(T))}\right), h_{t} \left(\Phi_{\W(P_n(T))}\right) = h_{-t}\left(\Phi^{-1}_{\F(P_n(T))}\right)
			\end{align*}
			\item If $\phi$ is further compactly supported, then, for any $t\in \R$ such that $h_t \left(\Phi_{\F(P_n(T))}\right) \geq 0$ and $h_t \left(\Phi_{\W(P_n(T))}\right) \geq 0$, the following equality holds:
			\begin{equation*}
				h_t \left(\Phi_{\F(P_n(T))} \right) 
			=h_t \left( \Phi_{\W(P_n(T))}\right).
			\end{equation*}
			In particular, since both $h_0 \left(\Phi_{\F(P_n(T))}\right) \geq 0$ and $h_0 \left(\Phi_{\W(P_n(T))}\right) \geq 0$  always hold, the following equality holds:
			\begin{equation*}
				h_0 \left(\Phi_{\F(P_n(T))} \right) 
			=h_0 \left( \Phi_{\W(P_n(T))}\right).
			\end{equation*}
		\end{enumerate}
	\end{thm}
	\begin{proof}
		Theorem \ref{thm:entropy} implies that, for any $t\in \R$,
		\begin{align*}
			h_{t}\left(\Phi_{\F(P_n(T)}\right) &= \lim_{m \to \infty} \frac{1}{m} \log \sum_{k\in \Z} \dim \Hom^{n+k} \left(\phi^m(S), L\right) e^{kt}\\
			&=  \lim_{m \to \infty} \frac{1}{m} \log \sum_{k\in \Z} \dim \Hom^{n+k} \left( S, \phi^{-m} ( L)\right) e^{kt}\\
			&= \lim_{m \to \infty} \frac{1}{m} \log \sum_{k\in \Z} \dim \Hom^{n-k} \left( S, \phi^{-m} ( L)\right) e^{k\cdot (-t)}\\
			&= h_{-t}\left(\Phi^{-1}_{\W(P_n(T))}\right). 
		\end{align*}
		The other equality can be proved similarly. This completes the proof of the first statement.
		
		Then the second statement follows from Theorem \ref{thm:entropy-w-pw} and the first statement.
	\end{proof}
	
	
	More generally, Theorems \ref{thm:entropy} and \ref{thm:equality} hold for any Weinstein manifold $W$ and any Liouville automorphism $\phi$ of $W$ if $W$ satisfies the conditions in the following theorem.
	\begin{thm}
		\label{thm general}
		Assume that a Weinstein manifold $W$ has sets of Lagrangians $\{S_i\}_{1 \leq i \leq k}$ and $\{L_i\}_{1 \leq i \leq k}$ such that 
		\begin{itemize}
			\item $\{S_i\}_{1 \leq i \leq k}$ (resp.\ $\{L_i\}_{1 \leq i \leq k}$) generates the compact (resp.\ wrapped) Fukaya category of $W$,
			\item the morphism space in the wrapped Fukaya category $\hom(S_i,L_j)$ is non-zero only if $i=j$ and is one-dimensional when $i=j$,
			\item $\bigoplus_{i,j} \hom^* (S_i,S_j)$ is non-negatively graded, 
			\item  $\bigoplus_{i,j} \hom^* (L_i,L_j)$ is non-positively graded,
			\item $\dim \Hom^0 (S_i,S_i) = 1 = \dim \Hom^0 (L_i,L_i)$ for all $1\leq i \leq k$, and
			\item $\dim \Hom^0(S_i, S_j) =0$ for $i\neq j$.
		\end{itemize}
		Let $S := \bigoplus_{i=1}^k S_i, L := \bigoplus_{i=1}^k L_i$.
		If $\phi$ is a Liouville automorphism of $W$, then the following holds:
		\begin{enumerate}
			\item[(1)] For any $t \in \mathbb{R}$, 
			\begin{gather*}
				h_t \left( \Phi_{\mathcal{F} (W)}\right) = \lim_{m \to \infty} \frac{1}{m} \log  \sum_{k \in \Z} \dim \Hom^{n+k} \left(\phi^m(S), L\right) e^{kt}, \\
				h_t \left( \Phi_{\mathcal{W} (W)}\right) = \lim_{m \to \infty} \frac{1}{m} \log  \sum_{k\in \Z} \dim \Hom^{n-k} \left(S, \phi^m(L)\right) e^{kt}.
			\end{gather*}
			\item[(2)] For any $t \in \mathbb{R}$,
			\begin{align*}
				h_{t} \left(\Phi_{\F(W)}\right) = h_{-t}\left(\Phi^{-1}_{\W(W)}\right), h_{t} \left(\Phi_{\W(W)}\right) = h_{-t}\left(\Phi^{-1}_{\F(W)}\right)
			\end{align*}
			\item[(3)] If $\phi$ is further compactly supported, then, for any $t \in \mathbb{R}$ such that $h_t \left(\Phi_{\F(W)}\right) \geq 0$ and $h_t \left(\Phi_{\W(W)}\right) \geq 0$,
			\begin{equation*}
				h_{t} \left(\Phi_{\F(W)}\right) = h_{t}\left(\Phi_{\W(W)}\right).
			\end{equation*}
			In particular, if $t=0$, the condition always holds. Thus, 
			\begin{equation*}
				h_0 \left(\Phi_{\F(W)}\right) = h_0\left(\Phi_{\W(W)}\right).
			\end{equation*}
		\end{enumerate}
	\end{thm}
	\begin{proof}
		In the proofs of Theorems \ref{thm:entropy} and \ref{thm:equality}, we only used the properties of $\F(P_n(T))$ and $\W(P_n(T))$ which are assumed in Theorem \ref{thm general}. 
		Thus, the proofs also work for a Weinstein manifold $W$ satisfying the assumptions.
	\end{proof}

	\begin{rmk}
		\mbox{}
		\begin{enumerate}
			\item We would like to point out that in the proof of Theorem \ref{thm general} (1) and (2), we did not use any geometric property of $\phi$ and we only used the duality between $\cW(W)$ and $\cF(W)$. 
			Thus, if a pair of categories $(\mathcal{C} \supset \mathcal{D})$ satisfies the duality conditions in Theorem \ref{thm general}, i.e., $\mathcal{C}$ admits a generator $L$ and $\cD$ admits a generator $S$ satisfying the conditions of Theorem \ref{thm general}, the proof works for any functors of $\mathcal{C}$ or $\mathcal{D}$.
			On the other hand, in the proof of Theorem \ref{thm general} (3), a Calabi-Yau property of Fukaya category is required. 
			\item We would like to introduce a result of \cite{Fan-Filip23} that can reduce the difficulty in applying Theorem \ref{thm general} (3).
			In order to apply Theorem \ref{thm general} (3), one should check that $h_t\left(\Phi_{\F(W)}\right), h_t\left(\Phi_{\W(W)})\right) \geq 0$.
			In \cite{Fan-Filip23}, Fan and Filip defined the {\em upper/lower shifting numbers $\tau^\pm(\Phi)$} for endofunctor $\Phi$ as analogues to Poincar\'e translation numbers. 
			Moreover, they proved that 
			\begin{gather*}
				 t \cdot \tau^+(\Phi) \leq h_t(\Phi) \leq h_0(\Phi) + t \cdot \tau^+(\Phi), \text{  for  } t \geq 0, \\
				 t \cdot \tau^-(\Phi) \leq h_t(\Phi) \leq h_0(\Phi) + t \cdot \tau^-(\Phi), \text{  for  } t \leq 0.
			\end{gather*}
			Thus, in order to check the conditions of Theorem \ref{thm general} (3), it would be enough to check the non-negativeness (resp.\ non-positiveness) of $\tau^+(\Phi)$ (resp.\ $\tau^-(\Phi)$) for $t \geq 0$ (resp.\ $t \leq 0$).
		\end{enumerate}
	\end{rmk}
	
	\begin{example} 
	Another simple example of Weinstein manifolds whose compact and wrapped Fukaya categories satisfy the duality condition is a cotangent bundle $T^*Q$ of a simply-connected closed smooth manifold $Q$. 
	Choose a background class as $b = \pi^*w_2(Q)$, the pullback of the second Stiefel-Whitney class of $Q$ via the projection $\pi : T^*Q \to Q$.
	Suppose $\phi$ be any Liouville automorphism of $T^*Q$ such that $\phi^* b =b$.
	Then one can show that there is an integer $d_\phi$ such that 
	\[ h_t(\Phi_{\F}) = d_\phi t= h_t(\Phi_{\W} ).\]
	It is a simple corollary of \cite{Nadler09,Fukaya-Seidel-Smith,Abouzaid12}.
	The proof is left to the reader.
	\end{example}

	\section{Entropies of symplectic automorphisms of Penner type}
	\label{section entropies of symplectic automorphisms of Penner type}
	Let $\phi$ be a compactly supported exact symplectic automorphism on $P_n(T)$, $n \geq 3$.
	Then, since the induced functors on the compact and wrapped Fukaya categories have the same categorical entropies for $t=0$ by Theorem \ref{thm:equality}, we simply say that $\phi$, not the induced functors, has a categorical entropy. 
	More precisely,
	\begin{dfn}
		\label{def categorical entropy of phi}
		If $\phi$ is any compactly supported exact symplectic automorphism on $P_n(T)$, then the {\em categorical entropy of $\phi$} is 
		\[h_{cat}(\phi) := h_0\left(\Phi_{\F(P_n(T))}\right) = h_0\left(\Phi_{ \W(P_n(T))}\right).\]
	\end{dfn}
	
	The result of Section \ref{section entropies of products of Dehn twists} says that
		\[ h_{cat}(\phi) = \lim_{m \to\infty} \frac{1}{m} \log \sum_{k \in \mathbb{Z}} \dim \Hom^k\left(\phi^{m}(S), L\right).\]
		Even if we have the above result, $h_{cat}(\phi)$ is not easy to compute because measuring the exponential growth rate of $\dim \Hom^*\left(\phi^{m}(S), L\right)$ is not an easy task in general. In this section, we prove that, if $\phi$ is of a symplectic automorphism of specific type on $P_n(T)$, its categorical entropy is related to the spectral radius of the matrix $M_{\phi}$ whose entries are given by the dimensions of $\Hom$ spaces. The specific type will be defined in Section \ref{subsection symplectic automorphisms of Penner type}.
		These results will more precisely stated in Theorem \ref{thm:entropyofpennertype}.
		Combined with the recent works \cite{Bae-Lee22, Bae-Lee23}, Section \ref{section entropies of symplectic automorphisms of Penner type} provides infinitely many Torelli group elements having positive topological (and categorical) entropy. 
		See Remark \ref{rmk torelli}. 
	
	\subsection{Symplectic automorphisms of Penner type}
	\label{subsection symplectic automorphisms of Penner type}
	Section \ref{section entropies of products of Dehn twists} cares any compactly supported exact symplectic automorphism on $P_n(T)$, but as mentioned in the beginning of Section \ref{section entropies of symplectic automorphisms of Penner type}, here we care symplectic automorphisms of a specific type, called {\em Penner type}.
	
	In order to define the notion of Penner type, we consider the following: 
	Let $T$ be a tree, and let $V(T)$ be the set of vertices. 
	Then, by choosing a vertex $v_0 \in V(T)$, one can define the following:
	\begin{equation}\label{eqn V(T)} 
		\begin{split}
			&V_+(T) := \{ v \in V(T) \hspace{0.2em} | \hspace{0.2em} v \text{  is connected to  } v_0 \text{  by even number of edges.}\} \\
			&V_-(T) := \{ v \in V(T) \hspace{0.2em} | \hspace{0.2em} v \text{  is connected to  } v_0 \text{  by odd number of edges.}\}
		\end{split}
	\end{equation}
	The above $V_+(T)$ and $V_-(T)$ are disjoint decomposition of $V(T)$. 
	
	\begin{dfn}
		\label{def Penner type}
		Let $\tau_v$ denote the Dehn twist along $S_v$.
		{\em A symplectic automorphism $\phi: P_n(T) \to P_n(T)$ is of Penner type} if either $\phi$ or $\phi^{-1}$ is a product of 
		\begin{itemize}
			\item positive powers of $\tau_v$ if $v \in V_+(T)$, and
			\item negative powers of $\tau_w$ if $w \in V_-(T)$.
		\end{itemize}
	\end{dfn}
	
	\begin{rmk}
		\mbox{}
		\begin{enumerate}
			\label{rmk welldefinedness of Penner type}
			\item This type of symplectic automorphisms is studied by the last named author in \cite{Lee}. 
			In \cite{Lee}, it is proved that symplectic automorphisms of Penner type satisfy a geometric stability. 
			Because of the geometric stability, we could expect that one could easily compute the categorical entropy of Penner type by a simple linear algebra.
			\item We would like to point out that two sets $V_+(T)$ and $V_-(T)$ defined in Equation \eqref{eqn V(T)} are dependent on the choice of $v_0$, but Definition \ref{def Penner type} is independent from the choice. 
			In the rest of this section, we assume there is an arbitrary chosen $v_0$.
			\item We note that the original Penner construction \cite{Penner} in surface theory has one more condition than Definition \ref{def Penner type}. 
			The extra condition is the following Condition (P).
			\begin{itemize}
				\item[(P)] every $v \in V(T)$ appears in the product. 
			\end{itemize} 
			We omit the condition (P) since the results in the rest of the paper hold without the condition (P).
				But, if $\phi$ does not satisfy (P), we expect that $\phi$ does not induce a pseudo-Anosov auto-equivalence in the sense of \cite{Fan-Filip-Haiden-Katzarkov-Liu}.
			For more details on surface theory, we refer the eager reader to \cite{Farb-Margalit}.
		\end{enumerate}
	\end{rmk}
	
	For the future use, we set the following notations. First, let us denote an ordered sequence of vertices $v_i \in V(T)$ by $( v_s, v_{s-1}, \dots, v_1 )$.
	
	\begin{dfn}
		\label{def sign etc}
		\mbox{}
		\begin{enumerate}
			\item For each $v \in V(T)$, let {\em $\sigma_v$} be defined as follows:
			\[\sigma_v  = \begin{cases}
				1 &\text{  if  } v \in V_+(T),\\
				-1 &\text{  if } v \in V_-(T).
			\end{cases}\]
			\item Let $J = (v_s, v_{s-1}, \dots, v_1)$ be an ordered sequence of vertices $v_i \in V(T)$.
			Then, {\em $\phi_J$} (resp.\ {\em $\phi_{-J}$}) is the symplectic automorphism of Penner type defined as follows:
			\[\phi_J= \tau_{v_s}^{\sigma_{v_s}} \circ \tau_{v_{s-1}}^{\sigma_{v_{s-1}}} \circ \dots \circ \tau_{v_1}^{\sigma_{v_1}}, \hspace{0.5em} \phi_{-J}= \tau_{v_s}^{-\sigma_{v_s}} \circ \tau_{v_{s-1}}^{-\sigma_{v_{s-1}}} \circ \dots \circ \tau_{v_1}^{-\sigma_{v_1}}.\]
		\end{enumerate}
	\end{dfn}
	We note that by Definition \ref{def Penner type}, for any $\phi$ of Penner type, there is an ordered sequence $J$ such that $\phi = \phi_J$ or $\phi = \phi_{-J}$. However, by changing a choice of $v_{0}$, one can always assume that $\phi = \phi_J$; see Remark \ref{rmk welldefinedness of Penner type}, (2).
	In the rest of the paper, we assume that $\phi = \phi_J$ for some $J$.

	\subsection{Construction of a twisted complex}
	\label{subsection construction of a twisted complex}
	In Section \ref{section entropies of products of Dehn twists}, we constructed a twisted complex in Theorem \ref{thm:simpletwistedcomplex} whose nonzero arrows are not a multiple of the identity morphism.  This property of the twisted complex helped to prove Theorem \ref{thm:equality}.
	Now, we construct a twisted complex with the same property, by using Seidel's long exact sequence \cite{Seidel03} in the wrapped Fukaya category, which will help us to prove the main theorem of this section.
	
	\begin{lem}\label{lem:dehntwist}
		\mbox{}
		\begin{enumerate}
			\item In the wrapped Fukaya category $\W(P_n(T))$, $\tau_v(S_w)$ is isomorphic to the following twisted complex:
			\begin{equation} \label{twistedcomplex1}
				\tau_v(S_w) \simeq \begin{cases}  S_w [1-n] &  v = w,\\ 
					\left[(S_v[1-s_{vw}], S_{w})  , f \right] & v \sim w, \\
					S_{w} & \mbox{otherwise.}
				\end{cases}
			\end{equation}
			\item In the wrapped Fukaya category $\W(P_n(T))$, $\tau_v^{-1}(S_w)$ is isomorphic to the following twisted complex:
			\begin{equation} \label{twistedcomplex2}
				\tau_v^{-1}(S_w) \simeq \begin{cases}  S_w [n-1] &  v = w,\\ 
					\left[(S_w, S_v[n-s_{vw}-1])  , g \right] & v \sim w, \\
					S_{w} & \mbox{otherwise.}
				\end{cases}
			\end{equation}
		\end{enumerate}
	\begin{proof}
				All the cases are from the Seidel's long exact sequence and the grading convention in Lemma \ref{lemma grading}.
		\end{proof}
	\end{lem}
	Note that $f$ and $g$ in \eqref{twistedcomplex1} and \eqref{twistedcomplex2} can be described explicitly, but we do not since they will not be used. 
	For the future convenience, let 
	\begin{equation}
		\label{eqn good twisted complex}
		\mathcal{S}_{m,v} = \left[ (S_w [d_i])_{\substack{w \in V(T) \hspace{12mm} \\ 1\leq i \leq \len_{S_w} (\mathcal{S}_{m,v})}}, \left(f_{ij} \right)\right]
	\end{equation}
	denote the twisted complex for $\phi^{m}(S_{v})$. 
	
	If we grade $S_{v}$'s so that $s_{uw} = 1$, or equivalently, $s_{wu}=n-1$ for any $u \in V_{+}(T)$, $w \in V_{-}(T)$, then we get the following lemma.
	\begin{lem}\label{lem:shift}
			With the notation given above, the shift $d_{i}$ of $S_w[d_{i}]$ in Equation \eqref{eqn good twisted complex} satisfies that
			\[d_{i} = l (1-n), \text{  for some  } l \in \mathbb{Z}.\]
	\end{lem}
	
	\subsection{Entropies of symplectic automorphisms of Penner type}
	\label{subsection entropies of symplectic automorphisms of Penner type}
	In this subsection, we prove the main theorem of Section \ref{section entropies of symplectic automorphisms of Penner type} (Theorem \ref{thm:entropyofpennertype}).
	It says that if a symplectic automorphism $\phi$ is of Penner type, then the categorical entropy $h_{cat}(\phi)$ can be computed by a simple linear algebra. 
	
	In order to state Theorem \ref{thm:entropyofpennertype}, we define the notion of spectral radius, first.
	\begin{dfn}
		\label{def spectral radius}
		Let $F : V \to V$ be a linear operator on a finite dimensional $\R$-vector space $V$. 
		The {\em spectral radius $\text{Rad}(F)$} of $F$ is defined by the maximum of absolute values of the complex eigenvalues of $F$. 
	\end{dfn}
	
	We state and prove the main theorem of Section \ref{section entropies of symplectic automorphisms of Penner type}.
	
	\begin{thm}\label{thm:entropyofpennertype}
		Let $\phi$ be a symplectic automorphism of Penner type on $P_n(T)$ with $n \geq 3$ and
		$M_{\phi} : \mathbb{R}^{|V(T)|} \to \mathbb{R}^{|V(T)|}$ be a matrix whose $uw$-entry is given by $\displaystyle \sum_{k} \dim \Hom^k\left(\phi(S_w),L_u \right)$.
			Then,
			\begin{equation*}
				h_{cat}(\phi) = \log \text{Rad} (M_{\phi}).
		\end{equation*}
	\end{thm}
	
	\begin{proof}
		We note that for convenience, we use the set $V(T)$ of vertices as an index set throughout the proof. 
		
		For any $v \in V(T)$, $\tau_v^{\sigma_v}$ is of Penner type. 
		Thus, $M_{\tau_v^{\sigma_v}}$ is defined and it satisfies that 
		\[\left(M_{\tau_v^{\sigma_v}}\right)_{uw} = \begin{cases}
			1 & u=w, \\
			1 & u=v, w \sim v, \\
			0 & \mbox{otherwise,}
		\end{cases}\]
		because of Lemma \ref{lem:dehntwist}. 
		
		We note that for any twisted complex $\mathcal{S} \in \Tw\{S_v | v \in V(T)\}$, Lemma \ref{lem:dehntwist} gives another twisted complex $\mathcal{S}' \simeq \tau_v^{\sigma_v}\left(\mathcal{S}\right)\in \Tw\{S_v | v \in V(T)\}$.
		Moreover, the following equation holds:
		\begin{gather}
			\label{eqn linear algebra}
			M_{\tau_v^{\sigma_v}} \cdot \left(\len_{S_w}\mathcal{S}\right)_{w \in V(T)} = \left(\len_{s_w}\mathcal{S}'\right)_{w \in V(T)}, \text{  where  } \left(\len_{S_w}\mathcal{S}\right)_{w \in V(T)}, \left(\len_{s_w}\mathcal{S}'\right)_{w \in V(T)} \in \mathbb{R}^{|V(T)|}.
		\end{gather}
		
		Now, we focus on the twisted complex $\mathcal{S}_{m,v}$ defined in Equation \eqref{eqn good twisted complex}. 
		Lemma \ref{lem:shift} implies that every arrow of $\mathcal{S}_{m,v}$ is not a scalar multiple of the identity morphism for any $m \in \mathbb{N}, v \in V(T)$. 
		Thus, $\hom^*\left(\mathcal{S}_{m,v},L_w\right)$ has the zero differential, and 
		\begin{equation}\label{eq:len-dim}
			\len_{S_w} \mathcal{S}_{m,v} = \dim \hom^*\left(\mathcal{S}_{m,v}, L_w\right) = \dim \Hom^*\left(\phi^m(S_v),L_w\right) \text{  for all  } m \in \mathbb{N}, v, w \in V(T).
		\end{equation}
		Thus, the $(w,v)$-entry of $M_{\phi^m}$ is given as 
		\begin{gather}
			\label{eqn each entry}
			\left(M_{\phi^m}\right)_{w,v} = \sum_{k \in \mathbb{Z}} \dim \hom^k\left(\mathcal{S}_{m,v}, L_w\right) = \len_{S_w} \mathcal{S}_{m,v}.
		\end{gather}
		We note that since $\phi^m$ is also of Penner type, $M_{\phi^m}$ is defined. 
		
		Let $\phi = \phi_J$ with some $J = (v_s, \dots, v_1)$, i.e., 
		\[\phi_J= \tau_{v_s}^{\sigma_{v_s}} \circ \tau_{v_{s-1}}^{\sigma_{v_{s-1}}} \circ \dots \circ \tau_{v_1}^{\sigma_{v_1}}.\]
		If $M_i$ denotes $M_{\tau_{v_i}^{\sigma_{v_i}}}$, Equations \eqref{eqn linear algebra} and \eqref{eqn each entry} conclude that 
		\begin{equation}\label{eq:matrix-properties}
			M_\phi = M_s \cdots M_2 \cdot M_1,\quad M_{\phi^m} = M_\phi^m.
		\end{equation}
		
		For any $v \in V(T)$, let $e_v \in \mathbb{R}^{|V(T)|}$ be the vector whose $v^\text{th}$-entry is $1$ and the other entries are zero. 
		And let $\parallel \cdot \parallel_1$ mean the $L^1$-norm on $\mathbb{R}^{|V(T)|}$.
		Theorem \ref{thm:entropy}, Equations \eqref{eq:len-dim}, \eqref{eq:matrix-properties}, and \eqref{eqn each entry} conclude that 
		\begin{equation}
			\label{eqn entropy computation}
			\begin{split}
				h_{cat}(\phi) &= \lim_{m \to \infty} \frac{1}{m} \log \sum_{v, w \in V(T)} \len_{S_w}\mathcal{S}_{m,v} \\
				&= \lim_{m \to \infty} \frac{1}{m} \log \max \left\{\parallel M_\phi^m \cdot e_v \parallel_1 \bigg| v \in V(T) \right\}
			\end{split}
		\end{equation}
		because every entry of $M_\phi$ is non-negative and $\{e_v | v \in V(T)\}$ is a basis of $\mathbb{R}^{|V(T)|}$. 
		Thus, by Gelfand's formula, the last term in Equation \eqref{eqn entropy computation} is the same as $\log \text{Rad}(M_\phi)$. 
	\end{proof}
	
	\begin{cor}
		\label{cor cat vs top}
		Let $\phi$ be a symplectic automorphism of Penner type, and let $n \geq 3$ be an odd integer.
		Then, 
		\[h_{cat}(\phi) \leq h_{top}(\phi),\]
		where $h_{top}(\phi)$ means the topological entropy of $\phi$.
	\end{cor}
	\begin{proof}
		When $n$ is odd, it is easy to see that the matrix representation of $\phi_* : H_n(P_n(T)) \to H_n(P_n(T))$ with respect to the basis $\{[S_{v}]\}_{v \in V(T)}$ and $M_{\phi}$ coincide. Moreover, it is well-known that 
			\[\log \text{Rad} \left( \phi_* : H_n(P_n(T)) \to H_n(P_n(T)) \right) \leq h_{top}(\phi).\]
			See \cite{Yomdin, Gromov1, Gromov2} for details. 
			Then, Theorem \ref{thm:entropyofpennertype} completes the proof.
	\end{proof}
	
	
	\subsection{A counterexample to Gromov--Yomdin type equality}
	\label{subsection Gormov Yomdin type equality}
	We give a counter-example of the Gromov--Yomdin type equality using Theorem \ref{thm:entropyofpennertype}.
	The equality claims that the categorical entropy of a functor $\Phi$ is the same as the logarithm of the spectral radius of the linear map which $\Phi$ induces on the Grothendieck group. 
	For more details on Gromov--Yomdin type equality, we refer the reader to \cite[Section 1.2]{Kikuta-Ouchi} 
	
	\subsubsection{First example}
	Let $T$ be the Dynkin diagram of $A_3$ type.
	We label the vertices of $T = A_3$ as $V(A_3) = \{1,2,3\}$ so that the vertex $1$ (resp.\ $3$) is connected to the vertex $2$.
	For a fixed $n \in \mathbb{N}_{\geq 3}$, let $\phi$ be the symplectic automorphism given as 
	\[\phi = \tau_1 \circ \tau_2^{-1} \circ \tau_3 : P_n(A_3) \stackrel{\simeq}{\to} P_n(A_3).\]
	
	Since $S_1, S_2$ and $S_3$ generate the compact Fukaya category of $P_n(A_3)$, $\{[S_1], [S_2], [S_3]\}$ generates the Grothendieck group of $\F(P_n(A_3))$. 
	By using Seidel's long exact sequences for Dehn twist $\tau_i$, one can have a linear map on the Grothendieck group, which is induced from $\tau_i$. 
	The matrix representations for $\tau_1, \tau_2^{-1}, \tau_3$ with respect to the basis $\{[S_1], [S_2], [S_3]\}$ are 
	\[B_1^+ := \begin{pmatrix}
		(-1)^{1-n} & 1 & 0 \\
		0 & 1 & 0 \\
		0 & 0 & 1
	\end{pmatrix}, 
	B_2^- := \begin{pmatrix}
		1 & 0 & 0 \\
		1 & (-1)^{n-1} & 1 \\
		0 & 0 & 1
	\end{pmatrix}, 
	B_3^+ := \begin{pmatrix}
		1 & 0 & 0 \\
		0 & 1 & 0 \\
		0 & 1 & (-1)^{1-n}
	\end{pmatrix}.
	\]
	We note that the matrix representations can vary by shifting $S_1, S_2, S_3$.
	To be more clear, we should have specified the grading information, but we omit that for convenience. 
	
	From the above computations, one can obtain a matrix representation of the linear map induced from $\phi$ on the Grothendieck group. 
	The resulting matrix is 
	\begin{gather*}
		B_\phi = B_1^+ \circ B_2^- \circ B_3^+ 
		= \begin{pmatrix}
			(-1)^{1-n}+1 &1+(-1)^{n-1} & (-1)^{n-1} \\
			1 & 1 + (-1)^{n-1} & (-1)^{n-1} \\
			0 & 1 & (-1)^{1-n}
		\end{pmatrix}.
	\end{gather*}

	If $n$ is even, then one can check by simple computation that the spectral radius of $B_\phi$ is $1$.
	Meanwhile, by Theorem \ref{thm:entropyofpennertype}, the categorical entropy of $\phi$ is equal to
	\[h_{cat}(\phi) = \log \text{Rad}M_\phi = \log \text{Rad}
	\begin{pmatrix}
		2 & 2 & 1 \\
		1 & 2 & 1 \\	
		0 & 1 & 1
	\end{pmatrix} = \log\left(2+\sqrt 3\right).\]
	Thus, the above example disproves Gromov--Yomdin type equality. 
	
	\begin{rmk}
		\label{rmk even dimensional case}
		\mbox{}
		\begin{enumerate}
			\item The previous computation also shows that there is no complex structure on $P_n(A_3)$ making $\phi$ holomorphic when $n$ is even. See \cite{Yomdin}.
			\item We note that \cite[Section 3.1]{Fan-Filip-Haiden-Katzarkov-Liu} computed the categorical entropy of symplectic automorphism of Penner type $\phi : P_n(A_2) \to P_n(A_2)$, where $A_2$ when $n \geq 3$ is an odd integer. To do that, they showed that the spectral radius equals to the categorical entropy, which is a special case of Theorem \ref{thm:entropyofpennertype} 
		\end{enumerate}
	\end{rmk}
	
	\subsubsection{Second example}
	\label{subsubsection torelli group}
	We provide another example of symplectic automorphism of Penner type that is an element of the Torelli subgroup, but induces an auto-equivalence on the compact/wrapped Fukaya category whose categorical entropy is positive. 
	
	We note that the Torelli subgroup of the mapping class group of a topological space is defined to be the subgroup consisting of mapping classes on the topological space which act trivially on its homology groups. 
	In our case, the Grothendieck group of $\cF(P_n(T))$ and $H_n(P_n(T);\mathbb{Z})$ are isomorphic via the map sending the class of Lagrangian $S_v$, $v\in V(T)$ to its homology class in the homology. 
	From this, it can be deduced that, for any symplectic automorphism on $P_n(T)$ which is an element of the Torelli subgroup, the spectral radius of its induced linear map on the Grothendieck group is 1. 

	To come up with such an example, let us recall that for any Lagrangian sphere $S$ of a symplectic manifold of dimension $4m$, $m\in \mathbb{N}$, the square of the Dehn twist along $S$ is smoothly isotopic to the identity \cite{Seidel97,Seidel99}. Now let $T$ be a tree and let $n\geq 4$ be an even integer. Then, for any $v\in V(T)$, the square $\tau_v^2$ induces the identity map on the homology since it is smoothly isotopic to the identity. Therefore, any symplectic automorphism $\phi$ of Penner type on $P_n(T)$ given by a product of $\tau_v^2$ for $v\in V_+(T)$ and $\tau_v^{-2}$ for $v\in V_-(T)$ is an element of the Torelli subgroup.
	Moreover, if the associated matrix $M_{\phi}$ (Theorem \ref{thm:entropyofpennertype}) has a positive spectral radius, then $\phi$ gives a counter example to Gromov-Yomdin type inequality at the same time.
	
	For example, let $T$ be the Dynkin diagram of $A_3$-type and let $n\geq 4$ be an even integer. We label the vertices of $T=A_3$ as $V(A_3) = \{1,2,3\}$ so that the vertex  $1$ (resp.\ $3$) is connected to the vertex $2$. Let $\phi$ be the symplectic automorphism given as 
	\[\phi = \tau_1^2 \circ \tau_2^{-2} \circ \tau_3 ^2: P_n(A_3) \stackrel{\simeq}{\to} P_n(A_3).\]
Then it is an element of the Torelli subgroup and therefore the spectral radius of the associated linear map on the Grothendieck group is 1 as explained above. However, Theorem \ref{thm:entropyofpennertype} again says that the categorical entropy of $\phi$ is equal to
\[h_{cat}(\phi) = \log \text{Rad}M_\phi = \log \text{Rad}
	\begin{pmatrix}
		5 & 10 & 4 \\
		2 & 5 & 2 \\	
		0 & 2 & 1
	\end{pmatrix} = \log\left(5+2\sqrt{6}\right).\]	
	
	\begin{rmk}
		\label{rmk torelli}
		Recently, in \cite{Bae-Lee23}, it is proven that every Penner type symplectic automorphism has positive categorical entropy. 
		Moreover, \cite{Bae-Lee22} proves that if $\phi$ is a compactly supported symplectic automorphisms on a Weinstein manifold, then the categorical entropy of $\phi$ bounds the topological entropy from below. 
		Thus, the above argument gives infinitely many examples of symplectic automorphisms $\phi$ such that $\phi$ is an element of Torelli subgroup, but the topological entropy of $\phi$ is positive. 
	\end{rmk}

	\bibliographystyle{amsalpha}
	\bibliography{entropy.bib}
	
\end{document}